\acrodef{BO}{{\sl Benjamin-Ono}}
\acrodef{rBO}{{\sl regularized Benjamin-Ono}}
\acrodef{rILW}{{\sl regularized Intermediate Long Wave}}
\acrodef{DSW}{{\sl Dispersive Shock Wave}}
\acrodef{DSWs}{{\sl Dispersive Shock Waves}}
\acrodef{ILW}{{\sl Intermediate Long Wave}}
\acrodef{CGN}{{\sl Conjugate Gradient-Newton}}
\acrodef{SW/SW}{{\sl Shallow water / Shallow water}}
\acrodef{B/B}{{\sl Boussinesq / Boussinesq}}
\newcommand{\sech}{\mathop{\operator@font sech}}
\newcommand{\sign}{\mathop{\operator@font sign}}
\newtheorem{theorem}{Theorem}[section]
\newtheorem{remark}{Remark}[section]
\numberwithin{equation}{section}
\begin{document}

\title[]{Long-term behaviour of symmetric partitioned linear multistep methods II. Invariants error analysis for some nonlinear dispersive wave models}

\author{Bego\~{n}a Cano}
\address{\textbf{B.~Cano:} Applied Mathematics Department, University of Valladolid, P/ Belen 15, 47011, Valladolid, Spain}
\email{bcano@uva.es}

\author{Angel Dur\'an}
\address{\textbf{A.~Dur\'an:} Applied Mathematics Department, University of Valladolid, P/ Belen 15, 47011, Valladolid, Spain}
\email{angeldm@uva.es}

\author{Melquiades Rodr\'\i guez}
\address{\textbf{M.~Rodr\'\i quez:} Applied Mathematics Department, University of Valladolid, P/ Belen 15, 47011, Valladolid, Spain}
\email{melquiades.rodriguez@uva.es}

%    Current address

%    \thanks will become a 1st page footnote.
%\thanks{D. Mitsotakis was supported by the Marsden Fund}

%    General info
\subjclass[2010]{XXXX}

%\date{\today}

%\dedicatory{This paper is dedicated to our advisors.}

\keywords{Symmetric PLMM, preservation of invariants, error growth, nonlinear Schr\"{o}dinger equation, Bona-Smith systems}

\begin{abstract}
In this paper, the use of partitioned linear multistep methods (PLMM) as time integrators for the numerical approximation  of some partial differential equations (pdes) is studied. We consider the periodic initial-value problem of two nonlinear dispersive wave models as case studies. From the spatial discretization with pseudospectral methods, the theory developed for PLMMs by the authors in a previous companion paper is applied to analyze the time integration with PLMMs of the semidiscrete equations when approximating solitary wave solutions. The results are illustrated with some numerical experiments. In addition, a computational study is performed in an exploratory fashion to analyze the extension of the results to the approximation of more general localized solutions.
\end{abstract}

\maketitle
%\tableofcontents

\section{Introduction}\label{sec1}
In a previous paper, \cite{CDR1}, the authors analyzed the growth with time of the error of the numerical approximation with PLMM of initial-value ode problems of dimension $n \ge 2$, written in the form
\begin{eqnarray}
\dot{P}(t)&=& f(P(t),Q(t)), \nonumber \\
\dot{Q}(t)&=& g(P(t),Q(t)), \quad t_{0}\leq t\leq T,\label{system} \\
P(t_0)&=&P_0, \nonumber \\
Q(t_0)&=&Q_0, \nonumber
\end{eqnarray}
where $T>t_{0}$, $P=P(t)\in \mathbb{R}^d, Q=Q(t)\in \mathbb{R}^{n-d},  1 \leq d \leq n-1$,   $f=f(P,Q)$ and $g=g(P,Q)$ smooth. The PLMMs are defined from some irreducible pair of generating polynomials $(\rho_p, \sigma_p)$ and $(\rho_q, \sigma_q)$, as
\begin{eqnarray}
\rho_p(E) P_n&=&\Delta t \sigma_p (E) f(P_n,Q_n), \nonumber \\
\rho_q(E) Q_n&=& \Delta t \sigma_q (E) g(P_n,Q_n),\quad \Delta t>0, \label{eq12}
\end{eqnarray}
where $(P_{n},Q_{n})$ approximates the solution $(P(t_{n}),Q(t_{n}))$ of (\ref{system}) at $t_{n}=t_{0}+n\Delta t, n=0,\ldots,N, N\Delta t=T-t_{0}$. Both schemes (\ref{eq12}) are assumed to be zero stable and have the same order $r$, with a suitable procedure for the starting values $P_\nu$ ($\nu=0,1,\dots,k_p-1$), $Q_\nu$ ($\nu=0,1,\dots,k_q-1$) satisfying
\begin{eqnarray}
P(t_\nu)-P_\nu&=&O(\Delta t^{r}), \quad \nu=0,1,\dots,k_p-1,\nonumber\\
Q(t_\nu)-Q_\nu&=&O(\Delta t^{r}),\quad \nu=0,1,\dots,k_q-1,\label{bcad0a}
\end{eqnarray}
where $\mbox{deg}(\rho_p)=k_p$, $\mbox{deg}(\rho_q)=k_q$. The main purpose of \cite{CDR1} was to study the time behaviour of the error with respect to both the solutions and conserved quantities of (\ref{system}) when using PLMMs of the form (\ref{eq12}) as numerical integrators. The key to this end is the derivation of an asymptotic expansion of the global error in powers of the step size $\Delta t$. Thus, the time behaviour of the errors in the conserved quantites can be analyzed from the growth with time of the corresponding coefficients of the expansion. The main conclusions of the analysis are the following:
\begin{itemize}
\item PLMMs where the first characteristic polynomials $\rho_{p}, \rho_{q}$ have common roots of unit modulus different from $x_{1}=1$ may lead to an exponential growth with time of the error and they are not recommended in general.
\item When the PLMM is symmetric (that is, when both $(\rho_p,\sigma_p)$ and $(\rho_q,\sigma_q)$ are symmetric) and with no common unitary roots of $\rho_{p}, \rho_{q}$ except $x_{1}=1$, the asymptotic expansion can be simplified, providing the conditions to control the time behaviour of the errors in the invariants. In this sense, the additional fact that these symmetric PLMMs can be explicit makes them an alternative as numerical schemes from the point of view of the geometric integration, especially for long-term simulations.
\end{itemize}
The main purpose of the present paper, as a second part of the approach, is to analyze if these geometric properties of this type of symmetric PLMMs can be used in the numerical approximation of nonlinear dispersive wave phenomena, modelled by partial differential systems and with special localized solutions as essential elements of their dynamics. The ambitious goal will be here delimited considering, as case studies, two types of equations:
\begin{itemize}
\item The nonlinear Schr\"{o}dinger (NLS) equation
\begin{eqnarray}
i u_t(x,t)+u_{xx}(x,t)+|u|^{2\sigma} (x,t) u(x,t)&=&0, \quad x\in \mathbb{R}, t>0,\label{nls}
\end{eqnarray}
with $\sigma>0$.
\item The $(a,b,c,d)$-Boussinesq system
\begin{eqnarray}
&&\eta_{t}(x,t)+w_{x}(x,t)+(w \eta)_{x}(x,t)+a w_{xxx}(x,t)-b \eta_{xxt}(x,t)=0, \label{bb} \\
&&w_{t}(x,t)+\eta_{x}(x,t)+w(x,t) w_{x}(x,t)+c \eta_{xxx}(x,t)-d w_{xxt}(x,t)=0, \nonumber
\end{eqnarray}
for $x\in \mathbb{R}, t>0$, and real coefficients $a,b,c,d$ where $a,c\leq 0, b,d \geq 0$, and $a+b+c+d=1/3$.
\end{itemize}
The origin of (\ref{nls}) and its influence in different areas of nonlinear science are well known, cf. e.~g. \cite{SulemS1999} and references therein. On the other hand, the system (\ref{bb}) is derived and analyzed in \cite{BCS1,BCS2} as one-dimensional model for the two-way propagation of surface waves on a uniform, horizontal channel approximating the corresponding Euler equations. In (\ref{bb}), $\eta=\eta(x,t)$ represents the value at $(x,t)$ of the height of the free surface of the incompressible, inviscid fluid of irrotational flow with respect to some level of rest, while $w=w(x,t)$ is proportional to the value at $(x,t)$ of the horizontal component of the velocity of the fluid at some height with respect to the bottom. (Cross-channel variations are assumed negligible.) The system admits a corresponding version to describe the propagation of internal waves, see e.~g. \cite{BLS2008,Du,Saut}.

The present paper is focused on the use of symmetric PLMMs when approximating the dynamics of solitary wave solutions of (\ref{nls}) and (\ref{bb}). They are localized, traveling wave solutions with a permanent profile and constant speed that decays to zero at infinity. In the case of NLS equation (\ref{nls}), the solitary waves are known explicitly, see \cite{D} and Section \ref{sec2} of the present paper. Boussinesq systems (\ref{bb}) also admit solitary wave solutions, although their exact formula is known only for specific speeds. In both cases, the solitary waves decay exponentially to zero as $x\rightarrow \pm\infty$. Our approach is focused on these solutions due to mainly two reasons: one is their influence in the general dynamics of these and other nonlinear dispersive wave models, \cite{Bona}. The second one, somehow related to the first, is that, in the Hamiltonian case, the solitary waves can be seen as relative equilibria or critical points of the Hamiltonian constrained to fixed values of those invariants associated to the symmetry groups involving their generation (such as spatial translations or rotations). This property suggests a relation between these waves and the conserved quantities which should be taken into account in the simulation of the dynamics, \cite{D}.

In order to analyze the behaviour of symmetric PLMMs when approximating solitary wave solutions of (\ref{nls}) and (\ref{bb}), the following approach will be made. Due to the localized property of the solutions, the initial-value problem (ivp) for both will be approximated by a periodic ivp on a long enough interval. This is a standard strategy to study computationally the dynamics of solitary waves, justified in some cases, \cite{BChen,Pasciak1982}. Then the corresponding periodic problem will be discretized in space with a Fourier spectral collocation method. The resulting semidiscrete ode system will be finally integrated in time with a symmetric PLMM. The description of this procedure for each equation (\ref{nls}) and (\ref{bb}), and the corresponding analysis of the time behaviour of the errors in the invariants, are made in Section \ref{sec2}.

The performance of the PLMMs will be analyzed in this paper from the time behaviour of the invariants of the problems through the numerical integration. Note that in the cubic case ($\sigma=1$) the equation (\ref{nls}) is integrable, and admits an infinite number of conserved quantities; the study of the present paper will be focused on the time behaviour of the error with respect to semidiscrete versions of the first three of them, mass, momentum, and energy, which are related to the formation of solitary wave solutions. As far as (\ref{bb}) is concerned, the systems do not seem to be integrable, \cite{DougalisM2008}, but when $b=d$ they are Hamiltonian and admit an additional second invariant, of quadratic type, determining the solitary wave solutions as relative equlibria as well. Semidiscrete forms of this quantity and the Hamiltonian are derived, and the analysis of the growth with time of the corresponding errors are made.

The  main contributions in this sense are the following. Assume that a symmetric $r$th-order PLMM, where $\rho_{p}, \rho_{q}$ have no common roots of unit modulus different from $x_{1}=1$, is used as time integrator for the semidiscrete systems of (\ref{nls}) and (\ref{bb}). Under these assumptions, and when approximating the corresponding solitary waves, it holds that:
\begin{itemize}
\item If the starting values of the symmetric PLMM procedure satisfy (\ref{bcad0a}), then the errors in the invariants are $O(\Delta t^{r})$ with a Landau constant independent of time for $t-t_0=O(\Delta t^{-1})$.
\item In the case of solitary waves of NLS equation, if the starting values differ from the exact ones in $O(\Delta t^{r+1})$ terms, then the errors in the invariants are $O(\Delta t^{r})$ with a Landau constant independent of time for $t-t_0=O(\Delta t^{-2})$.
\item In the case of solitary waves of Boussinesq systems, if the starting values differ from the exact ones in $O(\Delta t^{r+1})$ terms, then the errors in the invariants are $O(\Delta t^{r+1})$ with a Landau constant independent of time for $t-t_0=O(\Delta t^{-1})$.
\item In the case of solitary waves of Boussinesq systems, if the starting values differ from the exact ones in $O(\Delta t^{r+2})$, the errors in the invariants are $O(\Delta t^{r+2})$ for $t-t_0=O(\Delta t^{-1})$.
\end{itemize}
Some consequences of these results are explored in a computational study in Section \ref{sec3}. Here the aim is two-fold. We first illustrate the conclusions with some numerical experiments of comparison with other LMMs, one partitioned but not symmetric and one nonpartitioned and nonsymmetric, \cite{CDR1}. Then, a possible extension of the results is studied numerically. This concerns the performance when approximating other localized solutions whose dynamics involve solitary waves. The experiments are focused on the evolution from small perturbations of solitary waves and the resolution of initial data into trains of solitary waves (the so-called resolution property). It is important to determine the influence of the conservation of the invariants in the numerical integration on the correct simulation of relevant parameters of the waves, such as the amplitude, speed, and phases, \cite{D}. The analysis of this point when integrating with symmetric PLMM's, for the two problems considered in the present paper will be the subject of a future work.

%A second group of experiments studies how these methods behave with other nonlinear dispersive models admitting solitary wave solutions with different properties: they are not generated as relative equilibria, their decay to zero at infinity is nonmonotone and, instead of expoenential, algebraic.
%\section{An introductory experiment with the classical NLS}
%\label{sec2}
%
%\section{Effects of small perturbations}
%\label{sec3}
%
%\section{Effects of large perturbations}
%\label{sec4}

\section{Analysis of long-term integration of some nonlinear dispersive models}
\label{sec2}

\subsection{Preliminaries}
\label{secprel}
We recall in this subsection some particular details on the asymptotic expansion of the global error when integrating (\ref{system}) with the PLMMs which have been described in the introduction. These will be required for the analysis of the discretization of nonlinear dispersive models.

According to Theorem 2.1 in \cite{CDR1}, when the PLMM is such that $1$ is the only common unitary root of the first characteristic polynomials, there exist functions $E_{j,1,\alpha}$, $E_{j,i,\alpha,\beta}$, with $\alpha,\beta \in \{p,q\}$ such that, as $\Delta t\to 0$,
\begin{eqnarray}
P_n-P(t_n)&=&\sum_{j=r}^{2r-1} {\Delta t}^j \big[E_{j,1,p}(t_n) +\sum_{i=2}^{k_p'} x_{i,p}^n E_{j,i,pp}(t_n)+ \sum_{i=2}^{k_q'} x_{i,q}^n E_{j,i,pq}(t_n)\big]
\nonumber \\&&
+O({\Delta t}^{2r}), \nonumber \\
Q_n-Q(t_n)&=&\sum_{j=r}^{2r-1} {\Delta t}^j \big[E_{j,1,q}(t_n) +\sum_{i=2}^{k_p'} x_{i,p}^n E_{j,i,qp}(t_n)+ \sum_{i=2}^{k_q'} x_{i,q}^n E_{j,i,qq}(t_n)\big]
 \nonumber \\&&
+O({\Delta t}^{2r}). \label{ae}
\end{eqnarray}
where $\{x_{i,p} \}_{i=2}^{k_p'}$, (resp. $\{x_{i,q} \}_{i=2}^{k_q'}$) are the unitary roots of $\rho_p$ (resp. $\rho_q$) different from $1$.

Then, if $I$ is a smooth invariant quantity of (\ref{system}), according to Theorem 4.2 in \cite{CDR1},
\begin{eqnarray}
\lefteqn{I\left( \begin{array}{c} P_n \\ Q_n \end{array} \right)- I\left( \begin{array}{c} P_0 \\ Q_0 \end{array} \right)} \nonumber \\
&=& \sum_{j=r}^{2r-1} \Delta t^j \nabla I \left( \begin{array}{c} P(t_n) \\ Q(t_n) \end{array} \right)^T \bigg[  \sum_{i=2}^{k_p'} x_{i,p}^n \left( \begin{array}{c} E_{j,i,pp}(t_n) \\ E_{j,i,qp}(t_n) \end{array} \right)+ \sum_{i=2}^{k_q'} x_{i,q}^n \left( \begin{array}{c} E_{j,i,pq}(t_n) \\ E_{j,i,qq}(t_n) \end{array} \right) \bigg] \nonumber \\
&&- \sum_{j=r}^{2r-1} \Delta t^j \int_{t_0}^{t_n} (\nabla I) \left( \begin{array}{c} P(s) \\ Q(s) \end{array} \right)^T \left( \begin{array}{c} c_{j,p} P^{(j+1)}(s) \\ c_{j,q} Q^{(j+1)} (s) \end{array} \right)ds \nonumber \\
&&+\sum_{j=r}^{2r-1} \Delta t^j \nabla I \left( \begin{array}{c} P_0 \\ Q_0 \end{array} \right)^T  \left( \begin{array}{c} E_{j,1,p}(t_0) \\ E_{j,1,q}(t_0) \end{array} \right)+ O({\Delta t}^{2r}), \label{thI}
\end{eqnarray}
where $\{c_{j,p}\}, \{c_{j,q}\}$ are the coefficients associated to the local truncation error of both methods. More precisely,
\begin{eqnarray}
\rho_p(E) Y(t_n)- \Delta t \sigma_p(E) \dot{Y}(t_n)= \sigma_p(E) \big( \sum_{j=r}^{2r-1} c_{j,p} {\Delta t}^{j+1} Y^{(j+1)}(t_n)\big) +O({\Delta t}^{2r+1}), \nonumber \\
\rho_q(E) Y(t_n)-\Delta t \sigma_q(E) \dot{Y}(t_n)= \sigma_q(E) \big( \sum_{j=r}^{2r-1} c_{j,q} {\Delta t}^{j+1} Y^{(j+1)}(t_n)\big)+O({\Delta t}^{2r+1}). \nonumber
\end{eqnarray}
As also stated in Remark 4.2 of \cite{CDR1}, since $x_{i,p}^n$ and $x_{i,q}^n$ have unit modulus, the growth with time of error in the invariant is determined by that corresponding to the smooth part of the numerical solution (that associated to the integral third line in (\ref{thI})) and
\begin{eqnarray}
\nabla I \left( \begin{array}{c} P(t_n) \\ Q(t_n) \end{array} \right)^T  \left( \begin{array}{c} E_{j,i,pp}(t_n) \\ E_{j,i,qp}(t_n) \end{array} \right), \quad \nabla I \left( \begin{array}{c} P(t_n) \\ Q(t_n) \end{array} \right)^T  \left( \begin{array}{c} E_{j,i,pq}(t_n) \\ E_{j,i,qq}(t_n) \end{array} \right). \label{inc}
\end{eqnarray}

In the particular case that (\ref{system}) is Hamiltonian and $I=H$ is the Hamiltonian of the system, according to Corollary 4.1 in \cite{CDR1}, the third line in (\ref{thI}) can be written as
\begin{eqnarray}
\sum_{j=r}^{2r-1} \Delta t^j \int_{t_0}^{t_n} [c_{j,q} \dot{P}(t)^T Q^{(j+1)}(t)-c_{j,p}\dot{Q}(t)^T P^{(j+1)}(t)]dt,
\label{secline}
\end{eqnarray}
which if, in addition, both methods are symmetric (and thus $c_{j,p}=c_{j,q}=0$ for odd $j$ \cite{S}) becomes
\begin{eqnarray}
\lefteqn{\sum_{k=\frac{r}{2}}^{r-1} \Delta t^{2k} \int_{t_0}^{t_n} [c_{2k,q} \dot{P}(t)^T Q^{(2k+1)}(t)-c_{2k,p}\dot{Q}(t)^T P^{(2k+1)}(t)]dt} \nonumber \\
&=&\sum_{k=r/2}^{r-1} {\Delta t}^{2k} \bigg[c_{2k,q}\sum_{l=1}^k (-1)^{l+1}[P^{(l)}(t)^T Q^{(2k+1-l)}(t)-P^{(l)}(t_0)^T Q^{(2k+1-l)}(t_0)] \nonumber \\
&& \hspace{1.8cm} -c_{2k,p}\sum_{l=1}^k (-1)^{l+1}[Q^{(l)}(t)^T P^{(2k+1-l)}(t)-Q^{(l)}(t_0)^T P^{(2k+1-l)}(t_0)] \bigg] \nonumber \\
&&+\sum_{k=r/2}^{r-1} {\Delta t}^{2k} (-1)^{k+2} (c_{2k,q}-c_{2k,p}) \int_{t_0}^t P^{(k+1)}(s)^T Q^{(k+1)}(s)ds . \label{ham_smooth}
\end{eqnarray}

On the other hand, following Theorem 3.1 and Remark 3.1 in \cite{CDR1}, whenever the transition matrices associated to
\begin{eqnarray}
\dot{E}(t)=\lambda_{p,i,p} f_P(P(t),Q(t))E(t), \quad \dot{E}(t)=\lambda_{q,i,q} g_Q(P(t),Q(t))E(t),
\label{tm}
\end{eqnarray}
with
$$
\lambda_{p,i,p}=\frac{\sigma_p(x_{i,p})}{x_{i,p}\rho_p'(x_{i,p})}, \quad \lambda_{q,i,q}=\frac{\sigma_q(x_{i,q})}{x_{i,q}\rho_q'(x_{i,q})},$$
are bounded, as well as
\begin{eqnarray}
f_P(P(t),Q(t)), f_Q(P(t),Q(t)), g_P(P(t),Q(t)), g_Q(P(t),Q(t))
\label{fun_jac}
\end{eqnarray}
and their time derivatives,  the terms of the error in (\ref{ae}) associated to the non-common unitary roots behave as $O(\Delta t^r)$ (where the constant in Landau notation is bounded for $t-t_0=O(\Delta t^{-1})$). Moreover, if the starting procedure has order $r+1$, the result is valid for $t-t_0=O(\Delta t^{-2})$ (cf. Introduction). More particularly, in the last case, the coefficients associated to $j=r$ vanish, those associated to $j=r+1$ are bounded, the ones corresponding to $j=r+2$ grow linearly, etc. Furthermore, we will use afterwards that in the case that the starting procedure has order $r+2$, $E_{r+1,i,pq}$ and $E_{r+1,i,qp}$ vanish and  the initial conditions for the rest of the coefficients in $j=r+1$ satisfy (see (A5) in \cite{CDR1})
\begin{eqnarray}
E_{r+1,1,p}(t_0)+\sum_{i=2}^{k_p'} x_{i,p}^{\nu} E_{r+1,i,pp}(t_0)= \nu c_r^p p^{(r+1)}(t_0), \nonumber \\
E_{r+1,1,q}(t_0)+\sum_{i=2}^{k_q'} x_{i,q}^{\nu} E_{r+1,i,qq}(t_0)= \nu c_r^q q^{(r+1)}(t_0). \label{icrm2}
\end{eqnarray}
Besides, $E_{r+1,i,pp}(t)$ and $E_{r+1,i,qq}(t)$ satisfy each of the equations in (\ref{tm}).

\subsection{Approximation of solitary wave solutions of the nonlinear Schr\"odinger equation}
\label{sec21}
The NLS equation (\ref{nls}) can also be written as a real system
\begin{eqnarray}
p_{t}+q_{xx}+(p^{2}+q^{2})^{\sigma}q&=&0,\nonumber\\
q_{t}-p_{xx}-(p^{2}+q^{2})^{\sigma}p&=&0,\label{nls2}
\end{eqnarray}
where $u=p+iq$. We will consider both the ivp and the periodic ivp of (\ref{nls}), or (\ref{nls2}), and in the last case we will define the problem on an interval of periodicity $(l_i,l_s)$ of period $l=l_s-l_i$. It is well known that the quantities
\begin{eqnarray}
I_{1}(p,q)&=&\frac{1}{2}\int_{\Omega}(p^{2}+q^{2})dx=\frac{1}{2}\int_{\Omega}|u|^{2}dx, \label{nls3a}\\
I_{2}(p,q)&=&\frac{1}{2}\int_{\Omega}(pq_{x}-qp_{x})dx=\frac{1}{2}\int_{\Omega}{\rm Im}(u\overline{u}_{x})dx, \label{nls3b}\\
H(p,q)&=&\int_{\Omega}\left(\frac{1}{2}\left( (p_{x})^{2}+(q_{x})^{2}\right)\right.\left.-\frac{1}{2\sigma+2} (p^{2}+q^{2})^{\sigma+1}\right) d{x},\label{nls3c}
\end{eqnarray}
(called, respectively, mass, momentum and energy) are invariants by smooth enough and localized solutions of the ivp of (\ref{nls}) (when $\Omega=\mathbb{R}$) or smooth and $l$-periodic solutions of the periodic ivp on $\Omega=(l_i,l_s)$. The energy (\ref{nls3c}) is the Hamiltonian function of the canonical Hamiltonian formulation of (\ref{nls2}).

The quantities (\ref{nls3a})-(\ref{nls3c}) play a two-fold role in the existence and formation of solitary wave solutions, cf. e.~g. \cite{D} and references therein. The first one is concerned with the symmetry group, \cite{Olver}, of (\ref{nls2})
$(v,w)\mapsto G_{(\alpha,\beta)}(v,w), \alpha,\beta\in\mathbb{R}$ defined as
\begin{eqnarray}
G_{(\alpha,\beta)}(v,w)(x)=\begin{pmatrix} \cos\alpha&-\sin\alpha\\\sin\alpha&\cos\alpha\end{pmatrix}\begin{pmatrix}v(x-\beta)\\w(x-\beta)\end{pmatrix}.\label{symg}
\end{eqnarray}
This means that if $(p(x,t),q(x,t))$ is a solution of (\ref{nls2}), then $(\widetilde{p},\widetilde{q})= G_{(\alpha,\beta)}(p,q)$ is a solution of (\ref{nls2}). The relation between (\ref{symg}) and (\ref{nls3a})-(\ref{nls3b}) is given in the sense that
%for all $x\in\mathbb{R}$
\begin{eqnarray*}
\frac{d}{d\alpha}\Big|_{\alpha=0}G_{(\alpha,0)}(v,w)(x)&=&\begin{pmatrix}0&1\\-1&0\end{pmatrix}\delta I_{1}(v,w)(x),\\
\frac{d}{d\beta}\Big|_{\beta=0}G_{(0,\beta)}(v,w)(x)&=&\begin{pmatrix}0&1\\-1&0\end{pmatrix}\delta I_{2}(v,w)(x),
\end{eqnarray*}
where $\delta I$ denotes variational (Fr\'echet) derivative of $I$. That is, (\ref{nls3a})-(\ref{nls3b}) determine the infinitesimal generators of the symmetry group (\ref{symg}) of rotations and spatial translations.

In addition, the solitary waves can be derived as critical points $u_{0}(x)=p_{0}(x)+iq_{0}(x)$ of (\ref{nls3c}) constrained to fixed values of the mass (\ref{nls3a}) and the momentum (\ref{nls3b}). The constrained critical point condition (also called relative equilibrium condition)
\begin{eqnarray}
\delta\left(H(u_{0})-\lambda_{0}^{1}I_{1}(u_{0})-\lambda_{0}^{2}I_{2}(u_{0})\right)=0,
\label{rec}
\end{eqnarray}
for real Lagrange multipliers $\lambda_{0}^{j}, j=1,2$, will take the form
\begin{eqnarray}
u_{0}''+|u_{0}|^{2\sigma}u_{0}-\lambda_{0}^{1}u_{0}-i\lambda_{0}^{2}\partial_{x}u_{0}=0,\label{RE}
\end{eqnarray}
which can be written as a real system
\begin{eqnarray}
p_{0}''-\lambda_{0}^{1}p_{0}+\lambda_{0}^{2}q_{0}'+(p_{0}^{2}+q_{0}^{2})^{\sigma}q_{0}&=&0,\nonumber\\
q_{0}''-\lambda_{0}^{1}q_{0}-\lambda_{0}^{2}p_{0}'+(p_{0}^{2}+q_{0}^{2})^{\sigma}p_{0}&=&0.\label{fnls22_2}
\end{eqnarray}
Considering the polar representation $u_{0}(x)=e^{i\theta(x)}\rho(x)$ with real $\rho$ and $\theta$, and the orbit through $u_{0}$ by the symmetry group (\ref{symg})
$$\varphi(x,x_{0},\theta_{0})=G_{(\theta_{0},x_{0})}(u_{0})=\rho(x-x_{0})e^{i\theta(x-x_{0})+i\theta_{0}},$$ then
a two-parameter family of solitary-wave solutions of (\ref{nls}) can be obtained from the action of a one-parameter symmetry group determined by the Lagrange multipliers as
\begin{eqnarray}
\psi(x,t,\lambda_{0}^{1},\lambda_{0}^{2},x_{0},\theta_{0})&=&G_{(t\lambda_{0}^{1},t\lambda_{0}^{2})}(\varphi)\label{sw}\\
&=&\rho(x-t\lambda_{0}^{2}-x_{0})e^{i(\theta(x-t\lambda_{0}^{2}-x_{0})+\theta_{0}+\lambda_{0}^{1}t)}.\nonumber
\end{eqnarray}
The resolution of (\ref{RE}) leads to the explicit form, \cite{D}
\begin{eqnarray}
\rho(x)&=&(a(\sigma +1))^{1/2\sigma}({\rm sech}\sigma\sqrt{a}x)^{1/\sigma},\quad a=\lambda_{0}^{1}-\frac{(\lambda_{0}^{2})^{2}}{4},\nonumber\\
\theta(x)&=&\frac{\lambda_{0}^{2}}{2}x.\label{sw2}
\end{eqnarray}
In (\ref{sw}), the role of speed of the solitary wave is played by the second Lagrange multiplier $\lambda_{0}^{2}$, while the amplitude is determined by the combination of the multipliers given by the parameter $a$ in (\ref{sw2}).

For the numerical approximation of (\ref{nls2}), in particular when studying the dynamics of the solitary waves (\ref{sw}) and other localized solutions, a first typical step consists of approximating the ivp of (\ref{nls2}) by a periodic ivp on a long enough interval $\Omega=(l_i,l_s)$. More precisely, following \cite{BChen,Pasciak1982}, we assume:
\begin{itemize}
\item[(H1)] $\Omega$ is wide enough (it contains mainly the support of the solution of the ivp till time $T$) so that the difference in the supremum norm between the solution
of the ivp and the solution of the periodic ivp is negligible.
\end{itemize}
In a second approach, within the framework of the method of lines, the resulting periodic ivp is discretized in space. The choice considered here is given by a
Fourier pseudospectral method based on a uniform grid $x_{j}=l_i+j\Delta x, (j=0,\ldots,N-1, \Delta x=l/N$ of collocation points). We define
\begin{eqnarray*}
P_{N}(t)=(P_{N,0}(t),\ldots,P_{N,N-1}(t))^{T},\; Q_{N}(t)=(Q_{N,0}(t),\ldots,Q_{N,N-1}(t))^{T},
\end{eqnarray*}
where $(P_{N,j}(t),Q_{N,j}(t))$ approximates  $(p(x_j,t),q(x_j,t))$ in the sense that the vectors $P_{N}, Q_{N}$ satisfy the semidiscrete system
\begin{eqnarray}
\dot{P}_{N}&=&f(P_{N},Q_{N})=- A_N Q_{N}-(P_{N}.^2+Q_{N}.^2)^{\sigma}.Q_{N}, \nonumber \\
\dot{Q}_{N}&=&g(P_{N},Q_{N})=A_N P_{N}+(P_{N}.^2+Q_{N}.^2)^{\sigma}.P_{N}, \label{nlsdis}
\end{eqnarray}
where $A_N$ is a symmetric matrix which corresponds to the discretization of the second spatial derivative and $\cdot$ denotes pointwise product.

As stated in \cite{C}, the discretized system (\ref{nlsdis}) admits a canonical Hamiltonian structure with Hamiltonian function
\begin{eqnarray}
H_N(P_N,Q_N)=-\frac{l}{2}\bigg[ Q_N^T A_N Q_N+P_N^T A_N P_N+\frac{1}{\sigma+1}[(P_N.^2+Q_N.^2).^{\sigma+1}]^T {\bf 1}_N\bigg],
\label{HNnls}
\end{eqnarray}
where ${\bf 1}_N$ denotes the $N$-dimensional vector with all components equal to one.
Moreover,
$$I_{1,N}=\frac{l}{2}[P_N^T P_N+Q_N^T Q_N],$$
is an invariant quantity of this discretized system and
$$
I_{2,N}=\frac{l}{2}[P_N^T B_N Q_N-Q_N^T B_N P_N],$$
is a quasi-invariant of the system, where $B_N$ is the real antisymetric matrix which discretizes the first derivative in space. Finally,
\begin{eqnarray}
\frac{l}{N}I_{1,N}, \quad \frac{l}{N}I_{2,N},\quad \frac{l}{N}H_N,
\label{Ninv}
\end{eqnarray}
are the natural discretizations of the invariants (\ref{nls3a})-(\ref{nls3c}).
%$$
%H=\frac{1}{2}\int_a^b [|u_x|^2-\frac{1}{2}|u|^4], \quad I_1=-\frac{1}{2}\int_a^b |u|^2, \quad I_2=\frac{1}{2} \int_a^b \mbox{Im}(\bar{u} u_x).$$
%

In addition to (H1), the following conditions are assumed:

%We assume that the length of the interval and $N$ are large enough so that:
\begin{itemize}
\item[(H2)]
$N$ is as large enough so that the difference between the solitary wave nodal values and the solution of (\ref{nlsdis}) with the corresponding initial condition can be considered negligible for $0\leq t\leq T$ in the supremum norm.
\item[(H3)] When integrating (\ref{nlsdis}) with a PLMM, the residual terms in the asymptotic expansion (\ref{thI}) of the error in the invariants (\ref{Ninv}) have a Landau constant which is independent of $N$ for $0\le t \le T$.
\end{itemize}

\subsubsection{Error in the Hamiltonian}{When $I=l H_{N}/N$, with $H_N$ the Hamiltonian (\ref{HNnls}) of the system (\ref{nlsdis}), and the method is symmetric, then (\ref{ham_smooth}) applies. This simplifies the computation of the integral in (\ref{thI}) multiplied by $l/N$ through the evaluation of the terms
\begin{eqnarray}
\frac{l}{N} P_N^{(l)}(t)^T Q_N^{(2k+1-l)}(t),\quad \frac{l}{N} Q_N^{(l)}(t)^T P_N^{(2k+1-l)}(t),
\label{pqder}
\end{eqnarray}
and
\begin{eqnarray}
\frac{l}{N} \int_{t_0}^t P_N^{(k+1)}(s)^T Q_N^{(k+1)}(s)ds.
\label{intpq}
\end{eqnarray}
Note that, due to hypotheses (H1)-(H3), we can evaluate these expressions  at the solitary wave solution $(p(t),q(t))$ in (\ref{sw}), (\ref{sw2}), instead of the corresponding semidiscrete solution $(P_{N},Q_{N})$ of (\ref{nlsdis}), and due to its regularity and the boundedness
\begin{eqnarray}
\frac{d^j}{d x^j}[\mbox{sech}(x)]^{\frac{1}{\sigma}} \le C_j e^{-\frac{|x|}{\sigma}},
\label{bounddsech}
\end{eqnarray}
for some constant $C_j$, then the terms in (\ref{pqder}) are bounded by
$$C_{2k+1} \int_{-\infty}^{\infty}e ^{-2 \sqrt{a} |y|} dy=2 C_{2k+1} \int_0^\infty e^{-2 \sqrt{a} y}dy=\frac{C_{2k+1}}{\sqrt{a}},$$
where the constants $C_{2k+1}$ may grow with $k$ but are independent of time.
As for (\ref{intpq}),
we observe that, considering (\ref{sw})-(\ref{sw2}), it is in fact a good approximation of
\begin{eqnarray}
\int_{t_0}^{t_n} \int_{-\infty}^{\infty} \frac{d^{k+1}}{ds^{k+1}} p(x,s)
\frac{d^{k+1}}{ds^{k+1}}q(x,s) dx ds, \label{boundnls}
\end{eqnarray}
where
\begin{eqnarray}
p(x,t)&=&\rho(x-t \lambda_0^2-x_0)\cos(\alpha(x,t)), \quad \alpha(x,t)=\frac{\lambda_0^2}{2}(x-t\lambda_0^2-x_0)+\theta_0+\lambda_0^1 t, \nonumber \\
q(x,t)&=&\rho(x-t \lambda_0^2-x_0)\sin(\alpha(x,t)).  \label{pq}
\end{eqnarray}
We then notice that, by Leibniz rule,
\begin{eqnarray}
\lefteqn{\frac{d^{k+1}}{dt^{k+1}} p(x,t)=\sum_{\tiny \begin{array}{c} j=0 \\ j=2r \end{array}}^{k+1} {\small \left( \begin{array}{c} k+1 \\ j \end{array} \right)} (-\lambda_0^2)^{k+1-j} \rho^{(k+1-j)}(x-t \lambda_0^2-x_0) a^j (-1)^r \cos(\alpha(x,t))} \nonumber \\
&&+\sum_{\tiny\begin{array}{c} j=0 \\ j=2r+1 \end{array}}^{k+1} {\small\left( \begin{array}{c} k+1 \\ j \end{array} \right)} (-\lambda_0^2)^{k+1-j} \rho^{(k+1-j)}(x-t \lambda_0^2-x_0)a^j (-1)^{r+1} \sin(\alpha(x,t)), \nonumber \\
\lefteqn{\frac{d^{k+1}}{dt^{k+1}} q(x,t)=\sum_{\tiny \begin{array}{c}j=0 \\ j=2r \end{array}}^{k+1} {\small \left( \begin{array}{c} k+1 \\ j \end{array} \right)} (-\lambda_0^2)^{k+1-j} \rho^{(k+1-j)}(x-t \lambda_0^2-x_0)a^j (-1)^r \sin(\alpha(x,t))} \nonumber \\
&&+\sum_{\tiny\begin{array}{c} j=0 \\ j=2r+1 \end{array}}^{k+1} {\small \left( \begin{array}{c} k+1 \\ j \end{array} \right) } (-\lambda_0^2)^{k+1-j} \rho^{(k+1-j)}(x-t \lambda_0^2-x_0)a^j (-1)^{r} \cos(\alpha(x,t)), \nonumber
\end{eqnarray}
from which it holds that
\begin{eqnarray}
\lefteqn{\frac{d^{k+1}}{dt^{k+1}} p(x,t) \frac{d^{k+1}}{dt^{k+1}} q(x,t)
=\rho_{k+1,sc}(x-t\lambda_0^2-x_0) \sin(\alpha(x,t)) \cos(\alpha(x,t))} \nonumber \\
&&+\rho_{k+1,cc}(x-t \lambda_0^2-x_0) \cos^2(\alpha(x,t))+\rho_{k+1,ss}(x-t \lambda_0^2-x_0) \sin^2(\alpha(x,t)),\nonumber
\end{eqnarray}
where it happens that $\rho_{k+1,cc}=-\rho_{k+1,ss}$. Thus,
\begin{eqnarray}
\lefteqn{\frac{d^{k+1}}{dt^{k+1}} p(x,t) \frac{d^{k+1}}{dt^{k+1}} q(x,t)} \label{trig} \\
&&=\frac{1}{2}\rho_{k+1,sc}(x-t \lambda_0^2-x_0 ) \sin(2 \alpha(x,t)) +\rho_{k+1,cc}(x-t \lambda_0^2-x_0) \cos(2 \alpha(x,t)).\nonumber
\end{eqnarray}
Then, using (\ref{trig}), the change of variables $y=x-s \lambda_0^2-x_0$, and Fubini's theorem, (\ref{boundnls}) can also be written as
\begin{eqnarray}
\lefteqn{\frac{1}{2}\int_{-\infty}^{\infty} \rho_{k+1,sc}(|y|) [\int_{t_0}^{t_n}\sin(\lambda_0^2 y+2 \theta_0+2 \lambda_0^1 s) ds]dy} \nonumber \\
 &&+ \int_{-\infty}^{\infty} \rho_{k+1,cc}(|y|)[\int_{t_0}^{t_n}\cos(\lambda_0^2 y+2 \theta_0+2 \lambda_0^1 s)ds]dy \nonumber \\
&=&-\frac{1}{4 \lambda_0^1}\int_{-\infty}^{\infty} \rho_{k+1,sc}(|y|)[\cos(\lambda_0^2 y+2 \theta_0+2 \lambda_0^1 t_n)-\cos(\lambda_0^2 y+2 \theta_0+2 \lambda_0^1 t_0)]dy \nonumber \\
&&+\frac{1}{2 \lambda_0^1}\int_{-\infty}^{\infty}  \rho_{k+1,cc}(|y|)[\sin(\lambda_0^2 y+2 \theta_0+2 \lambda_0^1 t_n)-\sin(\lambda_0^2 y+2 \theta_0+2 \lambda_0^1 t_0)]dy. \nonumber
\end{eqnarray}
Now, as $\rho_{k+1,sc}(y)$ and $\rho_{k+1,cc}(y)$  are both bounded by $D_{k+1}e^{-2\sqrt{a}|y|}$ for some constant $D_{k+1}$,
(\ref{boundnls}) can be bounded in modulus by
$$
\frac{3}{|\lambda_0^1|} D_{k+1}\int_0^{\infty}  e^{-2\sqrt{a}y}dy= \frac{3}{2|\lambda_0^1|\sqrt{a}} D_{k+1},$$
which is independent of time.

On the other hand, the terms corresponding to the invariant $I=lH_N/N$ in (\ref{inc}) can be written as
\begin{eqnarray}
\frac{l}{N}[\dot{Q}_N(t_n)^T E_{j,i,pp}(t_n)-\dot{P}_N(t_n)^T E_{j,i,qp}(t_n)], \nonumber \\
\frac{l}{N}[\dot{Q}_N(t_n)^T E_{j,i,pq}(t_n)-\dot{P}_N(t_n)^T E_{j,i,qq}(t_n)]. \label{ejidd}
\end{eqnarray}
In order to bound $E_{j,i,pp}, E_{j,i,qp}, E_{j,i,pq}, E_{j,i,qq}$,  we must look at $f_P((P_N(t),Q_N(t))^T)$, $f_Q((P_N(t),Q_N(t))^T)$, $g_P((P_N(t),Q_N(t))^T)$, $g_Q((P_N(t),Q_N(t))^T)$ and their time derivatives, as well as to the transition matrices associated to (\ref{tm}). We thus notice that
\begin{eqnarray}
f_P(P_N,Q_N)&=&-2 \sigma \mbox{diag}((P_N.^2+Q_N.^2)^{\sigma-1}.P_N.Q_N), \nonumber \\
f_Q(P_N,Q_N)&=&-A_N- \mbox{diag}((P_N.^2+Q_N.^2)^{\sigma-1}(P_N.^2+(2 \sigma+1) Q_N.^2)), \nonumber \\
g_P(P_N,Q_N)&=&A_N+ \mbox{diag}((P_N.^2+Q_N.^2)^{\sigma-1}((2 \sigma+1) P_N.^2+Q_N.^2)), \nonumber \\
g_Q(P_N,Q_N)&=&2 \sigma \mbox{diag}((P_N.^2+Q_N.^2)^{\sigma-1}P_N.Q_N). \label{jac}
\end{eqnarray}
We firstly see that $f_P(P_N,Q_N)$ and $g_Q(P_N,Q_N)$ are clearly bounded with time when $P_N$ and $Q_N$ are mainly the nodal projection of (\ref{pq}). On the other hand, $(P_{N}.^2+Q_{N}.^2)^{\sigma-1}.P_{N}(t).Q_{N}(t)$ is in fact approximating the values at the space grid of
$$
\frac{1}{2}\rho^{2 \sigma}(x-t \lambda_0^2-x_0)\sin(2 \alpha(x,t)).
$$
Because of this and (H1)-(H3), the transition matrices associated to (\ref{tm}) are diagonal matrices which elements at every grid value $x_j$ are mainly given by
\begin{eqnarray}
e^{- \lambda_{p,i,p} \sigma \int_{t_0}^{t_n} \rho^{2 \sigma}(x_j-t \lambda_0^2-x_0)\sin(2 \alpha(x_j,t))dt}, \quad
e^{\lambda_{q,i,q} \sigma \int_{t_0}^{t_n} \rho^{2 \sigma}(x_j-t \lambda_0^2-x_0)\cos(2 \alpha(x_j,t))dt}. \nonumber
\end{eqnarray}

We can thus see that the integrals in the exponents of these expressions are always bounded in modulus by
$$4 a (\sigma+1) \int_{t_0}^{t_n} e^{-2\sigma \sqrt{a}|x_j-t\lambda_0^2-x_0|}dt\le 8 a (\sigma+1) \int_0^{\infty} e^{-2\sigma \sqrt{a}u}du=4 \sqrt{a} (1+\frac{1}{\sigma}).$$
Therefore, the transition matrices are bounded independently of time $t_n$. On the other hand, we also notice that $g_P(P(t),Q(t))$ and $f_Q(P(t),Q(t))$ will be bounded with time.
It is true that the norm of $A_N$ can be quite large but, in any case, if the starting values correspond to functions which are regular enough in space, then the application of $A_{N}$ over the coefficients $E_{j,i,pp}(t)$ and $E_{j,i,qq}(t)$ leads to uniformly bounded in N terms. Finally, the time derivatives of all the functions in (\ref{jac}) will be clearly bounded in time. On the other hand, as the derivatives of (\ref{pq}) are also bounded independently of time and their modulus is integrable in $(-\infty,\infty)$, it happens that (\ref{ejidd}) can be bounded independently of time with bounds of the order of $\|E_{j,i,pp}(t_n)\|_{\infty}$, $\|E_{j,i,pq}(t_n)\|_{\infty}$, $\|E_{j,i,qp}(t_n)\|_{\infty}$, $\|E_{j,i,qq}(t_n)\|_{\infty}$.}

%
%On the other hand, $P$ and $Q$ are the real and imaginary parts of the complex solution $u$ at the uniform grid in $[a,b]$ with $N$ nodes. We would like to stress that this space discretization is very accurate under assumptions of regularity and that not many nodes are necessary in order that the errors of the space discretization can be considered negligible.

%We are interested in the numerical integration of solitary wave solutions of the form
%\begin{eqnarray}
%u(x,t)=\mbox{sech}(\sqrt{2}(x-t)/2) e^{i((x-t)/2+3t/4)},
%\label{sol}
%\end{eqnarray}
%in a big enough interval, so that periodic boundary conditions do not spoil the numerical approximation.

\subsubsection{Error in $I_1$}{As for the error in $lI_{1,N}/N$, looking again at (\ref{thI}),  the following terms are required to be bounded:
\begin{eqnarray}
&&\frac{l}{N}[P_N(t_n)^T E_{j,i,pp}(t_n)+Q_N(t_n)^T E_{ji,qp}(t_n)], \nonumber \\
&&\frac{l}{N}[P_N(t_n)^T E_{j,i,pq}(t_n)+Q_N(t_n)^T E_{ji,qq}(t_n)], \nonumber \\
&&\frac{l}{N}\int_{t_0}^{t_n}[ c_{j,p}P_N(s)^T P_N^{(j+1)}(s)+c_{j,q} Q_N(s)^T Q_N^{(j+1)}(s)]ds. \label{bi1}
\end{eqnarray}
With similar arguments to those of (\ref{ejidd}), we notice that the first two terms in these expressions are bounded in time. As for the last term, when $(\rho_p, \sigma_p)$ and $(\rho_q, \sigma_q)$ are symmetric, it vanishes for odd $j$ because, in such a case, $c_{j,p}=c_{j,q}=0$ for odd $j$. When $j=2k$, since
\begin{eqnarray}
P_N(s)^T P_N^{(2k+1)}(s)&=&\frac{d}{ds}[P_N(s)^T P_N^{(2k)}(s)]-\frac{d}{ds}[\dot{P}_N(s)^T P_N^{(2k-1)}(s)]
\nonumber \\
&&+\frac{d}{ds}[\ddot{P}_N(s)^T P^{(2k-2)}_N(s)]+\dots+\frac{1}{2}\frac{d}{ds}[P_N^{(k)}(s)^T P_N^{(k)}(s)],\nonumber \\
Q_N(s)^T Q_N^{(2k+1)}(s)&=&\frac{d}{ds}[Q_N(s)^T Q_N^{(2k)}(s)]-\frac{d}{ds}[\dot{Q}_N(s)^T Q_N^{(2k-1)}(s)] \nonumber \\
&&
+\frac{d}{ds}[\ddot{Q}_N(s)^T Q_N^{(2k-2)}(s)]+\dots+\frac{1}{2}\frac{d}{ds}[Q_N^{(k)}(s)^T Q_N^{(k)}(s)], \nonumber
\end{eqnarray}
the last line in (\ref{bi1}) reduces to
\begin{eqnarray}
\lefteqn{\hspace{-1cm}\frac{l}{N}\bigg[ c_{2k,p} \big[ [P_N(t_n)^T P_N^{(2k)}(t_n)-P_N(t_0)^T P_N^{(2k)}(t_0)]} \nonumber \\
&&
%\hspace{1cm}
-[\dot{P}_N(t_n)^T P_N^{(2k-1)}(t_n)-\dot{P}_N(t_0)^T P_N^{(2k-1)}(t_0)] \nonumber \\
&&
%\hspace{1cm}
+[\ddot{P}_N(t_n)^T P_N^{(2k-2)}(t_n)-\ddot{P}_N(t_0)^T P_N^{(2k-2)}(t_0)]+\dots \nonumber \\
&&
%\hspace{1cm}
+\frac{1}{2}[P_N^{(k)}(t_n)^T P_N^{(k)}(t_n)-P_N^{(k)}(t_0)^T P_N^{(k)}(t_0)]\big]\nonumber \\
&&
\hspace{-0.7cm}
+c_{2k,q} \big[ [Q_N(t_n)^T Q_N^{(2k)}(t_n)-Q_N(t_0)^T Q_N^{(2k)}(t_0)] \nonumber \\
&&
%\hspace{1cm}
-[\dot{Q}_N(t_n)^T Q_N^{(2k-1)}(t_n)-\dot{Q}_N(t_0)^T Q_N^{(2k-1)}(t_0)] \nonumber \\
&&
%\hspace{1cm}
+[\ddot{Q}_N(t_n)^T Q_N^{(2k-2)}(t_n)-\ddot{Q}_N(t_0)^T Q_N^{(2k-2)}(t_0)]+\dots \nonumber \\
&&
%\hspace{1cm}
+\frac{1}{2}[Q_N^{(k)}(t_n)^T Q_N^{(k)}(t_n)-Q_N^{(k)}(t_0)^T Q_N^{(k)}(t_0)]\big]\bigg].\label{simI1}
\end{eqnarray}
We note now that all terms in the previous expressions are in fact bounded by
$$C_{2k}' \int_{-\infty}^{\infty} \rho_{2k}(|x-t_n \lambda_0^2-x_0|)dx \le 2 C_{2k}' \int_{t_n\lambda_0^2+x_0}^{\infty} e^{-2\sqrt{a}|x-t_n\lambda_0^2-x_0|}dx \le \frac{C_{2k}'}{\sqrt{a}},$$
or
$$C_{2k}' \int_{-\infty}^{\infty} \rho_{2k}(|x-t_0 \lambda_0^2-x_0|)dx \le 2 C_{2k}' \int_{t_0\lambda_0^2+x_0}^{\infty} e^{-2\sqrt{a}|x-t_0\lambda_0^2-x_0|}dx \le \frac{C_{2k}'}{\sqrt{a}},$$
for a certain constant $C_{2k}'$ and where $\rho_{2k}(y)$ is a certain function which comes from the successive $2k$ derivatives of $\mbox{sech}(\sigma \sqrt{a}y)^{\frac{1}{\sigma}}$ (see (\ref{bounddsech})).
From this, it is finally proved that all terms in (\ref{bi1}) are bounded independently of time.}

\subsubsection{Error in $I_2$}{
For the error in $lI_{2,N}/N$, similar arguments to those in $lH_N/N$ could be performed, but it is in fact not necessary since, at the relative equilibrium solution, $\nabla I_{2,N}$ will be, except for negligible errors, a linear combination of $\nabla I_{1,N}$ and $\nabla H_N$, cf. (\ref{rec}), \cite{D}.}

\subsubsection{Concluding results}{

From all the above, the following conclusions hold.

\begin{theorem}
\label{th1}
Consider the semidiscrete system (\ref{nlsdis}), with $N$ and $(l_i,l_s)$ satisfying (H1)-(H3) for $u(x,t)$ a solitary wave solution of the form (\ref{sw}), (\ref{sw2}). Assume that (\ref{nlsdis}) is integrated in time by some $r$th-order, symmetric PLMM (\ref{eq12}) where $\rho_{p}, \rho_{q}$ have no common unitary roots except $x_{1}=1$, with time step $\Delta t$. Then:
\begin{enumerate}
\item[(i)] If the starting values differ from the exact ones in $O(\Delta t^r)$, the errors in $lI_{1,N}/N$,  $lI_{2,N}/N$ and $lH_N/N$  are $O(\Delta t^r)$ with a Landau constant independent of time for $t-t_0=O(\Delta t^{-1})$.
\item[(ii)] If the starting values differ from the exact ones in $O(\Delta t^{r+1})$, the errors in $lI_{1,N}/N$, $lI_{2,N}/N$ and $lH_N/N$  are $O(\Delta t^r)$ with a Landau constant independent of time for $t-t_0=O(\Delta t^{-2})$.
\end{enumerate}
\end{theorem}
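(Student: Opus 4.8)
The plan is to assemble Theorem \ref{th1} by specializing the general error-expansion machinery of \cite{CDR1}, recalled in Section \ref{secprel}, to the three semidiscrete invariants $lI_{1,N}/N$, $lI_{2,N}/N$, $lH_N/N$ of (\ref{nlsdis}), and then invoking the detailed estimates that were carried out in the three preceding subsubsections. Concretely, I would start from the master formula (\ref{thI}) applied to each of the three invariants in turn. The expansion has three pieces: the oscillatory boundary terms weighted by $x_{i,p}^n$, $x_{i,q}^n$; the integral ``smooth part'' in the third line; and the initial-data term on the fourth line. For part (i) the order of the starting procedure is exactly $r$, so the $j=r$ coefficients are present, and the claim is that every piece of (\ref{thI}) is $O(\Delta t^r)$ with a time-independent Landau constant for $t-t_0=O(\Delta t^{-1})$.

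The key steps, for each invariant, are: (a) bound the oscillatory terms (\ref{inc}) — for $I=lH_N/N$ these take the form (\ref{ejidd}), for $I=lI_{1,N}/N$ the first two lines of (\ref{bi1}), and for $I=lI_{2,N}/N$ one reduces to the previous two by the relative-equilibrium identity (\ref{rec}). Here one invokes the boundedness of the transition matrices of (\ref{tm}) and of the Jacobians (\ref{jac}) and their time derivatives, all of which was verified in the subsubsection on the error in the Hamiltonian using the exponential decay (\ref{bounddsech}) and hypotheses (H1)--(H3); this gives that the $E_{j,i,\cdot\cdot}$ are $O(\Delta t^r)$ with a Landau constant bounded for $t-t_0=O(\Delta t^{-1})$ by Theorem 3.1 and Remark 3.1 of \cite{CDR1}, and since $|x_{i,p}^n|=|x_{i,q}^n|=1$ the products inherit the same bound. (b) Bound the smooth integral term: for $H_N$ use the symmetric-method simplification (\ref{ham_smooth}), which replaces the integral by the boundary expressions (\ref{pqder}) plus the single integral (\ref{intpq}); these were shown to be bounded uniformly in time by $C_{2k+1}/\sqrt a$ and $3 D_{k+1}/(2|\lambda_0^1|\sqrt a)$ respectively, using the trigonometric reduction (\ref{trig}) and Fubini. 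For $I_{1,N}$, the symmetry forces $c_{j,p}=c_{j,q}=0$ for odd $j$ and the $j=2k$ terms telescope as in (\ref{simI1}), each summand again bounded by $C_{2k}'/\sqrt a$. (c) Bound the initial-data term on the fourth line of (\ref{thI}): $\nabla I$ evaluated at $(P_0,Q_0)$ is $O(1)$ and $E_{j,1,\cdot}(t_0)$ is an $O(1)$ constant, so this contributes $O(\Delta t^r)$ at $t_0$, hence at all $t$. Summing over $j=r,\dots,2r-1$ and adding the $O(\Delta t^{2r})$ remainder yields (i).

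For part (ii), the only change is that the starting procedure now has order $r+1$. By the discussion after (\ref{fun_jac}), this makes the $j=r$ coefficients of the oscillatory part vanish, the $j=r+1$ coefficients bounded, the $j=r+2$ ones grow linearly in $t$, and so on; consequently the oscillatory part is still $O(\Delta t^r)$ but now with a Landau constant bounded for the longer window $t-t_0=O(\Delta t^{-2})$. The smooth integral term and the initial-data term are unaffected by the starting order (they are already time-uniformly bounded from step (b)--(c)), so they remain $O(\Delta t^r)$ over the same window. Collecting these gives (ii). I would note explicitly that, because $\sigma>0$ is arbitrary, one should check that all the constants $C_j$, $D_j$, $C_{2k}'$ in (\ref{bounddsech}) and its consequences depend only on $k$ and $\sigma$, not on $N$ or $t$ — this is exactly what (H2)--(H3) buy us.

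The main obstacle is not any single estimate but the verification, for general $\sigma>0$ and for the pseudospectral matrices $A_N$, $B_N$, that the hypotheses of Theorem 3.1 of \cite{CDR1} genuinely hold: namely that the Jacobians (\ref{jac}) and their time derivatives, and the transition matrices of (\ref{tm}), are bounded \emph{uniformly in $N$}. The delicate point, flagged in the text, is that $\|A_N\|$ grows with $N$, so one cannot simply bound $f_Q$, $g_P$ in operator norm; one must instead use that $A_N$ is applied only to the coefficients $E_{j,i,pp}$, $E_{j,i,qq}$, which (by the last sentence of Section \ref{secprel}) themselves solve the smooth variational equations (\ref{tm}) with spatially regular data, so $A_N E_{j,i,\cdot\cdot}$ stays uniformly bounded in $N$. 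Making this rigorous rather than heuristic is where hypotheses (H1)--(H3) are doing the real work, and it is the step I would expect a referee to scrutinize most.
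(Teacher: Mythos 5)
Your proposal is correct and follows essentially the same route as the paper: the master expansion (\ref{thI}) specialised to each invariant, the symmetric simplification (\ref{ham_smooth}) with the bounds on (\ref{pqder}) and (\ref{intpq}) via the trigonometric reduction and Fubini, the telescoping (\ref{simI1}) for $I_{1,N}$, the relative-equilibrium reduction (\ref{rec}) for $I_{2,N}$, and the transition-matrix/Jacobian boundedness feeding Theorem 3.1 and Remark 3.1 of \cite{CDR1} to get the $O(\Delta t^{-1})$ versus $O(\Delta t^{-2})$ windows. You also correctly isolate the same delicate point the paper flags, namely that $\|A_N\|$ grows with $N$ and must only act on the spatially regular coefficients $E_{j,i,pp}$, $E_{j,i,qq}$.
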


\begin{remark}
We notice that, in the non-symmetric case, the simplification (\ref{ham_smooth}) for the error in the Hamiltonian cannot be done. Then, associated to the odd indexes $j$ in (\ref{secline}), using integration-by-parts, there would appear terms of the form
$$
\int_{t_0}^{t_n} P_N(t) Q_N^{(2k+3)}(t)dt,$$
which would give rise to terms in $\cos()\sin()$, which integral is bounded with arguments similar to those above, and others in $\cos()^2$ which integral grows linearly with time. The latter would explain the linear error growth with time with these methods.

Similarly, for the error in $I_{1}$, for odd $j$ in (\ref{bi1}), (\ref{simI1}) cannot be applied and the integrals of functions containing a factor $\sin()^2$ or $\cos()^2$ are likely to grow linearly.
\label{rem1}
\end{remark}}

%\begin{theorem}
%When integrating system (\ref{nlsdis}), assuming that $N$ is large enough so that the error in the pseudospectral space discretization is negligible and that the initial condition corresponds to that of the solitary wave solution (\ref{sol}), if a partitioned linear multistep method is used where both methods are symmetric, convergent of order $r$ and have no common roots except for $x_1=1$, it happens that
%\begin{enumerate}
%\item[(i)] If the starting values differ from the exact ones in $O(h^r)$, the errors in $(b-a)H_N/N$, $(b-a)I_{1,N}/N$ and $(b-a)I_{2,N}/N$   are $O(h^r)$ with a Landau constant independent of time for $t=O(h^{-1})$.
%\item[(ii)] f the starting values differ from the exact ones in $O(h^{r+1})$, the errors in $(b-a)H_N/N$, $(b-a)I_{1,N}/N$ and $(b-a)I_{2,N}/N$  are $O(h^r)$ with a Landau constant independent of time for $t=O(h^{-2})$.
%\end{enumerate}
%\end{theorem}

\subsection{Approximation to solitary wave solutions of the Boussinesq Bona-Smith systems}
\label{sec22}
This section is devoted to the corresponding analysis of the second case study, the three-parameter Boussinesq system (\ref{bb}). Among its relevant properties, we are here interested in two of them. One is the existence of the following conserved quantities for smooth, localized solutions of the ivp or smooth solutions of the periodic ivp of (\ref{bb}). The first are the linear quantities
\begin{eqnarray}
M_1(\eta,w)= \int_{\Omega} \eta dx, \quad M_2(\eta,w)= \int_{\Omega} w dx,\label{invbb1}
%I(\eta,w)&=&\int_{\Omega} (\eta w+ b \eta_x w_x) dx, \\
%H(\eta,w)&=&\frac{1}{2}\int_{\Omega} [\eta^2+w^2-a w_x^2 -c \eta_x^2 +\eta w^2]dx \;({\rm Hamiltonian}),
\end{eqnarray}
which hold in the general case, while
\begin{eqnarray}
I(\eta,w)&=&\int_{\Omega} (\eta w+ b \eta_x w_x) dx, \label{invbb2}\\
H(\eta,w)&=&\frac{1}{2}\int_{\Omega} [\eta^2+w^2-a w_x^2 -c \eta_x^2 +\eta w^2]dx,\label{invbb3}
\end{eqnarray}
are conserved only in the case $b=d$. In (\ref{invbb1})-(\ref{invbb3}),  $\Omega=\mathbb{R}$ or an interval of periodicity $(l_i,l_s), l=l_s-l_i>0$. We note that (\ref{invbb3}) is the Hamiltonian function of a noncanonical Hamiltonian structure of (\ref{bb}), \cite{BCS2}.

On the other hand, some of the systems (\ref{bb}) admit solitary wave solutions
\begin{eqnarray}
\eta(x,t)=\eta_{s}(x-c_{s}t-x_{0}), \quad w(x,t)=w_{s}(x-c_{s}t-x_{0}), \label{simh}
\end{eqnarray}
for $c_{s}\neq 0, x_{0}\in\mathbb{R}$ and some functions $\eta_{s}$ and $w_{s}$ satisfying
\begin{eqnarray}
-c_{s}\eta_{s} +w_{s}+w_{s}\eta_{s} +aw_{s}''-c_{s}b\eta_{s}''&=&0,\nonumber\\
-c_{s}w_{s}+\eta_s+\frac{w_{s}^{2}}{2}+c\eta_s''-c_{s}d w_{s}''&=&0,\label{RE2}
\end{eqnarray}
such that
\begin{eqnarray}
|\eta_{s}(x)| \le C e^{-\alpha|x|}, \quad |w_{s}(x)| \le C e^{-\alpha|x|},\quad x\in\mathbb{R},\label{asymp}
\end{eqnarray}
for some $C,\alpha>0.$ The existence is limited by the values of the parameters $a,b,c,d$ and speed $c_{s}$, cf. e.~g. \cite{DougalisM2008} and references therein.

Note that in the Hamiltonian case $b=d$, (\ref{RE2}) can be written as the constrained critical point condition
\begin{eqnarray}
\delta (H(\eta_{s},w_{s})-c_{s}I(\eta_{s},w_{s}))=0,\label{RE2b}
\end{eqnarray}
with the speed $c_{s}$ as a Lagrange multiplier. The invariant $I$ determines the infinitesimal generator of the symmetry group of (\ref{bb}) of spatial translations
\begin{eqnarray*}
G_{\epsilon}(\eta(x),w(x))=(\eta(x-\epsilon),w(x-\epsilon)),\quad \epsilon, x\in\mathbb{R},
\end{eqnarray*}
and therefore the solitary waves (\ref{simh}) can be obtained from the application of the one-parameter symmetry group $G_{tc_{s}}, t\geq 0$, to the orbit of the profile $(\eta_{s},w_{s})$, solution of (\ref{RE2}). Explicit formulas for the solitary waves (\ref{simh}) are only known for specific values of the speed, namely
\begin{eqnarray}
&&\eta_{s}(x)=\frac{3(1-\beta^{2})}{\beta^{2}}{\rm sech}^{2}(\lambda x),\quad w_{s}(x)=\beta\eta_{s}(x),\label{sw3}\\
&&\lambda=\frac{1}{2}\sqrt{\frac{2(1-\beta^{2})}{(a-b)\beta^{2}+2b}},\quad c_{s}=\frac{2-\beta^{2}}{\beta},\nonumber
\end{eqnarray}
for the parameter $\beta\neq 0$ satisfying suitable conditions, \cite{MChen}.

The spatial discretization of the periodic ivp of (\ref{bb}) on a long enough interval $\Omega$ with a Fourier collocation method leads to the semidiscrete system
\begin{eqnarray}
\dot{\Gamma}_{N}&=& -(I_N-b B_N^2)^{-1}\left( B_N (I_N+a B_N^2)W_{N}+[B_N W_{N}. \Gamma_{N}+ W_{N} . B_N \Gamma_{N}]\right),\nonumber \\
\dot{W}_{N}&=& -(I_N-d B_N^2)^{-1}\left(B_N (I+c B_N^2)\Gamma_{N}+[W_{N}.B_N W_{N}]\right), \label{sdbb}
\end{eqnarray}
where, if $(\eta,w)$ is the corresponding solution of the periodic ivp, then
\begin{eqnarray*}
&&\Gamma_{N}(t)=(\Gamma_{N,0}(t),\ldots,\Gamma_{N,N-1}(t))^{T},\; \Gamma_{N,j}(t)\approx \eta(x_{j},t),\\
&&W_{N}(t)=(W_{N,0}(t),\ldots,W_{N,N-1}(t))^{T},\; W_{N,j}(t)\approx w(x_{j},t), \quad j=0,\dots,N-1,
\end{eqnarray*}
and $B_N$ is the antisymmetric matrix which discretizes the first derivative in space \cite{C}.

%$$
%H(\eta,w)=\frac{1}{2}\int_{-\infty}^{\infty} [\eta^2+w^2-a w_x^2 -c \eta_x^2 +\eta w^2]dx,
%$$
%also in such a case.
%
%Let us state now other invariants which the space discretized system has.

\begin{theorem}
System (\ref{sdbb}) has the following invariant quantities:
\begin{eqnarray*}
 M_{1,N}= \Gamma_N^T {\mathbf 1}_N, \quad M_{2,N}= W_N^T {\mathbf 1}_N,
\end{eqnarray*}
and, in addition, when $b=d$,
\begin{eqnarray}
I_N( \Gamma_N, W_N)&=&\Gamma_N^T W_N+b \Gamma_N^T B_N^T B_N W_N, \nonumber \\
H_N(\Gamma_N, W_N)&=&\frac{1}{2}[ \Gamma_N^T \Gamma_N+W_N^T W_N-a  W_N^T B_N^T B_N W_N-c  \Gamma_N^T B_N^T B_N \Gamma_N+\Gamma_N^T W_N.^2 ].\nonumber
\end{eqnarray}
\end{theorem}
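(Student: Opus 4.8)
The plan is to verify each conservation law by differentiating the candidate invariant along the semidiscrete flow (\ref{sdbb}) and showing that the result vanishes identically. I would rely on four elementary facts about the Fourier discretization matrices: (i) $B_N$ is antisymmetric, $B_N^T=-B_N$, and annihilates the constant vector, $B_N\mathbf{1}_N=0$ (it discretizes $\partial_x$ on a periodic grid); (ii) hence, for $\gamma\in\{b,d\}$, the operator $(I_N-\gamma B_N^2)^{-1}$ is symmetric, commutes with $B_N$, and satisfies $(I_N-\gamma B_N^2)^{-1}\mathbf{1}_N=\mathbf{1}_N$; (iii) for grid vectors $u,v,w$ the triple pointwise product $\sum_j u_jv_jw_j$ is symmetric in its three arguments, and in particular $\mathbf{1}_N^T(u.v)=u^Tv$; and (iv) $B_N$ obeys a discrete Leibniz rule, $B_N(u.v)=(B_Nu).v+u.(B_Nv)$, with the special case $\tfrac12 B_N(u.^2)=u.(B_Nu)$, which will play the role of integration by parts.

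For $M_{1,N}=\Gamma_N^T\mathbf{1}_N$ and $M_{2,N}=W_N^T\mathbf{1}_N$ the argument is short and uses no restriction on the parameters. Differentiating and inserting (\ref{sdbb}), fact (ii) lets $(I_N-bB_N^2)^{-1}$ (resp.\ $(I_N-dB_N^2)^{-1}$) be dropped once it is hit from the left by $\mathbf{1}_N^T$; the remaining dispersive contribution is $\mathbf{1}_N^T B_N(\cdot)=-(B_N\mathbf{1}_N)^T(\cdot)=0$; and the nonlinear contribution reduces, by (iii), to $(B_NW_N)^T\Gamma_N+W_N^T B_N\Gamma_N$ for $M_{1,N}$ and to $W_N^T B_N W_N$ for $M_{2,N}$, both of which vanish because $B_N^T=-B_N$. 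Hence $\dot M_{1,N}=\dot M_{2,N}=0$.

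Turn now to the case $b=d$ and write $L:=I_N-bB_N^2=I_N-dB_N^2$. First I would record the gradients $\nabla_\Gamma I_N=LW_N$, $\nabla_W I_N=L\Gamma_N$, $\nabla_\Gamma H_N=(I_N+cB_N^2)\Gamma_N+\tfrac12 W_N.^2$, $\nabla_W H_N=(I_N+aB_N^2)W_N+\Gamma_N.W_N$, and observe that, after using fact (iv) on the nonlinear terms, (\ref{sdbb}) takes the skew-gradient form $\dot\Gamma_N=-L^{-1}B_N\nabla_W H_N$, $\dot W_N=-L^{-1}B_N\nabla_\Gamma H_N$. It is precisely the hypothesis $b=d$ that makes the \emph{same} operator $L^{-1}B_N$ occur in both equations, so that $(\dot\Gamma_N,\dot W_N)$ equals $\mathcal{J}\,\nabla H_N$ with $\mathcal{J}$ block-antisymmetric (because $L^{-1}B_N$ is antisymmetric, $L^{-1}$ being symmetric and commuting with $B_N$). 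Conservation of $H_N$ is then immediate, $\tfrac{d}{dt}H_N=(\nabla H_N)^T\mathcal{J}\,\nabla H_N=0$. For $I_N$ I would compute $\tfrac{d}{dt}I_N=(LW_N)^T\dot\Gamma_N+(L\Gamma_N)^T\dot W_N=-W_N^T B_N\nabla_W H_N-\Gamma_N^T B_N\nabla_\Gamma H_N$ (the $L$'s cancelling the $L^{-1}$'s by symmetry); the dispersive pieces $W_N^T B_N(I_N+aB_N^2)W_N$ and $\Gamma_N^T B_N(I_N+cB_N^2)\Gamma_N$ vanish as skew quadratic forms, leaving $-W_N^T B_N(\Gamma_N.W_N)-\tfrac12\Gamma_N^T B_N(W_N.^2)$. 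Rewriting $-\tfrac12\Gamma_N^T B_N(W_N.^2)=-\Gamma_N^T(W_N.B_NW_N)=-(B_NW_N)^T(\Gamma_N.W_N)$ by facts (iv) and (iii), and using $B_N^T=-B_N$ once more, this collapses to $-W_N^T B_N(\Gamma_N.W_N)+W_N^T B_N(\Gamma_N.W_N)=0$. This last cancellation is the discrete shadow of the periodic identity $\int_\Omega\partial_x(\eta w^2)\,dx=0$ behind conservation of (\ref{invbb2}).

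I expect the only delicate point to be the nonlinear terms: reorganizing the triple pointwise products and invoking the discrete Leibniz/summation-by-parts identities for $B_N$ so that the cubic contributions cancel exactly. Everything else is routine linear algebra with antisymmetric operators and the constant vector $\mathbf{1}_N$. As a consistency check I would verify that the very same manipulations carried out at the continuum level reproduce the known conservation of (\ref{invbb1})--(\ref{invbb3}) for (\ref{bb}), and in particular that both the cancellation in $\tfrac{d}{dt}I_N$ and the antisymmetry of $\mathcal{J}$ used for $H_N$ genuinely require $b=d$, matching the hypothesis of the statement.
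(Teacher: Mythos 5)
Your proof is correct and follows essentially the same route as the paper's: differentiate each candidate invariant along the semidiscrete flow, use $B_N\mathbf{1}_N=0$ together with the antisymmetry of $B_N$ for $M_{1,N}$ and $M_{2,N}$, recognize the skew-gradient (noncanonical Hamiltonian) structure of (\ref{sdbb}) — available precisely because $b=d$ makes the same symmetric operator $(I_N-bB_N^2)^{-1}$ appear in both equations — to conserve $H_N$, and cancel the cubic terms in $\tfrac{d}{dt}I_N$ by summation by parts with $B_N$, exactly as in the paper's final index computation. The only point worth flagging is your ``fact (iv)'': the discrete Leibniz rule $B_N(u.v)=(B_Nu).v+u.(B_Nv)$ is precisely the identity the paper uses silently to pass from (\ref{sdbb}) to the form (\ref{sys}), and for the Fourier collocation differentiation matrix it holds only up to aliasing, so strictly speaking both your argument and the paper's require either reading the nonlinear terms in the grouped form $B_N(W_N.\Gamma_N)$ and $W_N.B_NW_N$ that appears in the paper's final cancellation, or invoking the well-resolved regime of hypothesis (H2).
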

\begin{proof}
We first notice that
\begin{eqnarray}
\nabla H_N(\Gamma_N, W_N)=\left( \begin{array}{c} \Gamma_N+c B_N^2 \Gamma_N+\frac{1}{2} W_N.^2 \\ W_N+a B_N^2 W_N+W_N \cdot \Gamma_N. \end{array} \right),
\label{gradH}
\end{eqnarray}
and it happens that (\ref{sdbb}) can be written as
\begin{eqnarray}
\left( \begin{array}{c} \dot{\Gamma}_N \\ \dot{W}_N \end{array} \right)&=&\left( \begin{array}{c} f (\Gamma_N, W_N) \\ g(\Gamma_N, W_N)  \end{array} \right) \label{sys}\\
&:=&\left( \begin{array}{cc} (I_N-b B_N^2)^{-1} B_N & 0 \\ 0 & -(I_N-d B_N^2)^{-1} B_N \end{array} \right) J_N \nabla H_N( \Gamma_N, W_N),\nonumber
\end{eqnarray}
where
$$J_N=\left( \begin{array}{cc} 0 & I_N \\ -I_N & 0 \end{array} \right).$$
Then
\begin{eqnarray}
\nabla M_{1,N}^T \left( \begin{array}{c} f (\Gamma_N, W_N) \\ g(\Gamma_N, W_N)  \end{array} \right)&=& ( {\mathbf 1}_N \, \, 0)^T \left( \begin{array}{c} f (\Gamma_N, W_N) \\ g(\Gamma_N, W_N)  \end{array} \right)=0, \nonumber \\
\nabla M_{2,N}^T \left( \begin{array}{c} f (\Gamma_N, W_N) \\ g(\Gamma_N, W_N)  \end{array} \right)&=& ( 0 \, \, {\mathbf 1}_N)^T \left( \begin{array}{c} f (\Gamma_N, W_N) \\ g(\Gamma_N, W_N)  \end{array} \right)=0, \nonumber
\end{eqnarray}
where the last equalities come from (\ref{sys}) and the identity $B_N {\mathbf 1}={\mathbf 0}$. (This can be directly deduced from the fact that the derivative of a constant function,  which is itself a trigonometric function, vanishes, or directly from the expression of $B_N$, see \cite{C}.)

On the other hand, when $b=d$, because of the antisymmetry of the product of the two matrices appearing in (\ref{sys}), the system (\ref{sdbb}) admits a (noncanonical) Hamiltonian formulation with Hamiltonian given by $H_{N}$. Consequently, $H_{N}$ is a conserved quantity of the semidiscrete system.
As for $I_N$, it suffices to notice that
$$\nabla I_N \left( \begin{array}{c} \Gamma_N \\ W_N \end{array} \right)= \left( \begin{array}{c} (I_N-b B_N^2) W_N \\ (I_N-b B_N^2) \Gamma_N \end{array} \right),$$
and thus, using (\ref{sys}),
\begin{eqnarray}
\lefteqn{\hspace{-1cm}\nabla I_N(\Gamma_N, W_N)^T \left( \begin{array}{c} f (\Gamma_N, W_N) \\ g(\Gamma_N, W_N)  \end{array} \right)= -W_N^T B_N (W_N \cdot \Gamma_N)- \Gamma_N^T (W_N \cdot B_N W_N)}
\nonumber \\
&&= -\sum_{i,j} W_{N,i} (B_N)_{i,j} W_{N,j} \Gamma_{N,j}- \sum_{j,i} \Gamma_{N,j} W_{N,j} (B_N)_{j,i} W_{N,i}=0,
\nonumber
\end{eqnarray}
where the first and last equality are due to the antisymmetry of $B_N$.
\end{proof}

\begin{remark}
We notice that  $l M_{1,N}/N $, $l M_{2,N}/N$, $l I_N/N $ and $l H_N/N $ are the natural discretizations of the invariants (\ref{invbb1})-(\ref{invbb3}).
%$$
%I(\eta,w)=\int_{-\infty}^{\infty} (\eta w+ b \eta_x w_x) dx, \quad M_1= \int_{-\infty}^{\infty} \eta dx, \quad M_2= \int_{-\infty}^{\infty} w dx.$$
\end{remark}

\begin{remark}
It is well-known that any standard LMM conserves the linear invariants of the system they integrate \cite{Gear}. Although we believe the result is not true for any general linear invariant when considering PLMMs, in the case of  the linear invariants $M_{1,N}$ and $M_{2,N}$, the same argument as in \cite{Gear} can be applied because they just depend on one of the variables, either $\Gamma_N$ or $W_N$. Then, any PLMM conserves those invariants.
\label{remlin}
\end{remark}

We will assume from now on that similar hypotheses to (H1)-(H3) in Subsection \ref{sec21} hold. On the other hand, because of Remark \ref{remlin} and due to the fact that, at the solitary wave, $\nabla H_N$ will be proportional to $\nabla I_N$ (c.f. (\ref{RE2b})), we will just center our analysis on the error in the Hamiltonian, knowing that similar results will hold for the invariant $I$.

\subsubsection{Error growth in the Hamiltonian when considering the smooth part of the numerical solution}
{Note that, since the Hamiltonian structure is not canonical, (\ref{secline}) cannot be applied and we must estimate the third line in (\ref{thI}) directly. More precisely,
\begin{eqnarray}
\hspace{-0.2cm}\frac{l}{N} \int_{t_0}^t (\nabla H_N)^T (\Gamma_N(u), W_N(u))\left( \begin{array}{c} c_{j,\Gamma} \Gamma_N^{(j+1)}(u) \\ c_{j,W} W_N^{(j+1)}(u) \end{array} \right)du, j=r,\dots,2r-1.
\label{smoothbb}
\end{eqnarray}
When both methods being used are symmetric, the only nonvanishing expressions above correspond to $j=2k$ ($k=\frac{r}{2},\dots,r-1$). Considering (\ref{gradH}) and (H1), (H2), for large enough $\Omega$ and $N$, these are in fact a good approximation of
\begin{eqnarray}
\lefteqn{\hspace{-1cm}c_{2k,\Gamma} \int_{t_0}^t \int_{-\infty}^{\infty} [\eta+c \eta_{xx}+\frac{1}{2} w^2] \frac{d^{2k+1}}{du^{2k+1}}\eta dx du} \nonumber \\
&& +c_{2k,W} \int_{t_0}^t \int_{-\infty}^{\infty} [w+ \eta w+ a w_{xx}] \frac{d^{2k+1}}{du^{2k+1}}w dx du.
\label{erhbb}
\end{eqnarray}
If we assume now that $\eta, w$ are of the form (\ref{simh}) with even $\eta_s$ and $w_s$,
%\begin{eqnarray}
%\eta(x,t)=h_{\eta}(x-ct), \quad w(x,t)=h_w(x-ct), \label{simh}
%\end{eqnarray}
%for some even functions $h_{\eta}$ and $h_w$ and a constant $c$,
it happens that, for a fixed value of time $u$, the expressions in the integrals in (\ref{erhbb}) are antisymmetric with respect to $c_su+x_0$. Because of that, the integral in space vanishes for every value of $u\in[t_0,t]$ and thus also the integral in time.
}

\subsubsection{Error growth in the Hamiltonian when considering the non-smooth part of the numerical solution}
{Now, we will see how to bound the terms in (\ref{inc}), that is, those associated to $E_{j,i,\Gamma \Gamma}$, $E_{j,i,\Gamma W}$, $E_{j,i,W \Gamma}$ and $E_{j,i,WW}$. To this end, considering that in the Hamiltonian case $b=d$,
\begin{eqnarray}
f_{\Gamma}(\Gamma_N,W_N)&=&-(I_N-d B_N^2)^{-1}[\mbox{diag}(B_N W_N)+\mbox{diag}(W_N) B_N], \nonumber \\
g_{W}(\Gamma,W)&=&-(I_N-d B_N^2)^{-1}[\mbox{diag}(B_N W_N)+\mbox{diag}(W_N) B_N], \label{fpgq}
\end{eqnarray}
and following (\ref{tm}), it is necessary to see if the transition matrices associated to the following problems are bounded
\begin{eqnarray}
\dot{E}&=&-\lambda_{\Gamma, i, \Gamma}(I-d B_N^2)^{-1}[\mbox{diag}(B_N W_N)+\mbox{diag}(W_N) B_N]E, \nonumber \\
\dot{E}&=&-\lambda_{W, i, W }(I-d B_N^2)^{-1}[\mbox{diag}(B_N W_N)+\mbox{diag}(W_N) B_N]E. \label{E}
\end{eqnarray}
As both differential systems are of the same type, we will simplify the notation for the scalar as $\lambda$. We will see that the conclusion being drawn is quite independent of that value. We first notice that $E$ is the vector which is approximating at every node of the space discretization the solution of
$$e_t= -\lambda (1-d \partial_x^2)^{-1} \partial_x (we),$$
where $\partial_x$ denotes differentiation with respect to the variable $x$. Using Fourier analysis, when $d>0$, see \cite{BCS1}, this can be written as
\begin{eqnarray}
e_t(x,t)=-\lambda \int_{-\infty}^{\infty} k_d(x-y)w(y,t)e(y,t)dy,
\label{et}
\end{eqnarray}
where
$$
k_d(z)=\frac{1}{2}\sqrt{d} \mbox{ sign}(z) e^{-\frac{|z|}{\sqrt{d}}}.
$$
We then notice that, under the assumption, cf. (\ref{asymp})
\begin{eqnarray}
|w(y,t)| \le C e^{-\alpha|y-c_st -x_0|}, \mbox{ for }\alpha>0,
\label{cotaw}
\end{eqnarray}
it holds that
\begin{eqnarray}
\int_{-\infty}^{\infty} |k_d(x-y) w(y,s)| dy &\le&  \frac{C \sqrt{d}}{2} \int_{-\infty}^{\infty} e^{-\frac{|v|}{\sqrt{d}}}e^{-\alpha |x-v-c_s s -x_0|}dv. \nonumber
\end{eqnarray}
We then distinguish between the following cases:
\begin{list}{$\bullet$}{}
\item
$x-c_s s-x_0<0$. In this case, if $\alpha \neq \frac{1}{\sqrt{d}}$, it is easily deduced that
\begin{eqnarray}
\lefteqn{\int_{-\infty}^{\infty} e^{-\frac{|v|}{\sqrt{d}}}e^{-\alpha |x-v-c_s s -x_0|}dv } \nonumber \\
% &=& \int_{-\infty}^{x-c_s s-x_0} e^{\frac{v}{\sqrt{d}}}e^{-\alpha (x-v-c_s s-x_0)}dv + \int_{x-c_s s -x_0}^0 %e^{\frac{v}{\sqrt{d}}}e^{\alpha (x-v-c_s s -x_0)}dv \nonumber \\
%&&+ \int_0^\infty e^{-\frac{v}{\sqrt{d}}}e^{\alpha (x-v-c_s s -x_0)}dv \nonumber \\
&=&\frac{e^{\frac{1}{\sqrt{d}}(x-c_s s -x_0)}+e^{\alpha(x-c_s s-x_0)}}{\frac{1}{\sqrt{d}}+\alpha}+\frac{e^{\alpha(x-c_s s -x_0)}-e^{\frac{1}{\sqrt{d}}(x-c_s-x_0)}}{\frac{1}{\sqrt{d}}-\alpha}.
\label{for1}
\end{eqnarray}
If $\alpha=1/\sqrt{d}$, the second term in (\ref{for1}) will be  $e^{\alpha(x-c_s s-x_0)}(c_s s+x_0-x)$.
\item
$x-c_s s -x_0>0$. If $\alpha \neq \frac{1}{\sqrt{d}}$,
\begin{eqnarray}
\lefteqn{\int_{-\infty}^{\infty} e^{-\frac{|v|}{\sqrt{d}}}e^{-\alpha |x-v-c_s s -x_0|}dv} \nonumber \\
% &=& \int_{-\infty}^0 e^{\frac{v}{\sqrt{d}}}e^{-\alpha (x-v-c_s s -x_0)}dv + \int_0^{x-c_s s -x_0} e^{-\frac{v}{\sqrt{d}}}e^{-\alpha (x-v-c_s s -x_0)}dv
% \nonumber \\
%&& +\int_{x-c_s s-x_0}^{\infty} e^{-\frac{v}{\sqrt{d}}}e^{\alpha (x-v-c_s s -x_0)}dv \nonumber\\
&=&\frac{e^{-\frac{1}{\sqrt{d}}(x-c_s s-x_0)}+e^{-\alpha(x-c_s s-x_0)}}{\frac{1}{\sqrt{d}}+\alpha}+\frac{e^{-\alpha(x-c_s s-x_0)}
-e^{-\frac{1}{\sqrt{d}}(x-c_s s -x_0)}}{\frac{1}{\sqrt{d}}-\alpha}.
\label{for2}
\end{eqnarray}
If $\alpha=1/\sqrt{d}$, the second term in (\ref{for2}) will be $e^{-\alpha(x-c_s s-x_0)}(x-c_s s -x_0)$.
\end{list}

Therefore, by using the integral calculus fundamental theorem in (\ref{et}), if $\alpha \neq 1/\sqrt{d}$, there exists a constant $C'>0$ (which just depends on $C,\lambda$, $\alpha$ and $\sqrt{d}$) such that, for fixed $x$ and $t$,
\begin{eqnarray}
|e(x,t)|\le |e(x,t_0)|+ C' \int_{t_0}^t  e^{-\min\{\alpha, \frac{1}{\sqrt{d}} \}|x-c_s s -x_0|} \|e(\cdot, s)\|_{\infty} ds.
\label{bound1}
\end{eqnarray}
It is clear that the integral in (\ref{bound1}), as continuous function of $x$, attains the maximum at some $x^*(t)\in [c_s t_0-x_0,c_s t-x_0]$. Then

%We then notice that the maximum for $x\in (-\infty,\infty)$ in the integral expression must be taken for $x\in [t_0,t]$ since it is then when the expression in modulus in the exponent is clearly smaller. As the integral is continuous in $x$, the maximum will be taken in that compact interval for $x^*(t)\in [t_0,t]$. Then, it happens that
\begin{eqnarray}
\|e(\cdot,t)\|_{\infty} \le \|e(\cdot,t_0)\|_{\infty}+  C' \int_{t_0}^t  e^{-\min\{\alpha, \frac{1}{\sqrt{d}} \}|x^*(t)-c_s s-x_0|} \|e(\cdot, s)\|_{\infty} ds,
\nonumber
\end{eqnarray}
which, by Gronwall's lemma, implies, for some constant $C'$, that
$$
\|e(\cdot,t)\|_{\infty} \le \|e(\cdot,t_0)\|_{\infty} \, e^{ C' \int_{t_0}^t e^{-\min\{\alpha, \frac{1}{\sqrt{d}} \}|x^*(t)-c_s s-x_0|} ds}.
$$
As, for $\beta>0$, it happens that
\begin{eqnarray}
\int_{t_0}^t e^{-\beta |x^*(t)-c_s s-x_0|} ds
%&=& \int_{t_0}^{\frac{x^*(t)-x_0}{c_s}} e^{-\beta |x^*(t)-c_s s-x_0|} ds \nonumber \\
%&&+\int_{\frac{x*(t)-x_0}{c_s}}^t  e^{-\beta |x^*(t)-c_s s-x_0|} ds
\le \frac{2}{\beta c_s},
\nonumber
\end{eqnarray}
it follows that
$$
\|e(\cdot,t)\|_{\infty} \le \|e(\cdot,t_0)\|_{\infty} \, e^{\frac{2 C'}{\min\{\alpha, \frac{1}{\sqrt{d}} \}c_s}}.
$$
This explains that the transition matrices associated to both systems in (\ref{E}) are bounded with time, and a similar conclusion holds, by similar arguments, for the case $\alpha=1/\sqrt{d}$.

Finally, the bound for (\ref{fun_jac}) and their time derivatives requires enough regularity in time of (\ref{fpgq}) as well as that of
\begin{eqnarray}
f_W(\Gamma_N, W_N)&=&-(I-d B_N^2)^{-1}[B_N(I+a B_N^2)-\mbox{diag}(\Gamma_N)B_N-\mbox{diag}(B_N \Gamma_N)], \nonumber \\
g_{\Gamma}(\Gamma_N,W_N)&=& -(I-d B_N^2)^{-1}B_N(I+c B_N^2),
\nonumber
\end{eqnarray}
at the solution $(\Gamma,W)$.
We notice that, in fact, the expression for $g_{\Gamma}$ does not depend on $\Gamma_N$ either on $W_N$. Moreover, the regularity in time of $f_\Gamma$, $f_W$, $g_W$ correspond to the regularity of $\Gamma_N(t)$ and $W_N(t)$ itself.

Because of all this, following the remarks at the end of Section \ref{secprel}, the terms of the error associated to the non-common unitary roots behave as $O(\Delta t^r)$ for $t-t_0=O(\Delta t^{-1})$, and the same happens for the first line in (\ref{thI}) when $I=lH_N/N$ whenever $l\|\nabla H_N(\Gamma_N(t_n),W(t_n))\|_1/N=l\|(\dot{W}(t_n),-\dot{\Gamma}_N(t_n))\|_1/N$, which approximates
$$\int_{-\infty}^{\infty} |w_t(x,t)|dx+\int_{-\infty}^{\infty} |\eta_t(x,t)|dx,$$
is bounded with time. Note that, due to the special form of solitary waves (\ref{simh}), that happens whenever
\begin{eqnarray}
\int_{-\infty}^{\infty} |\eta_s'(x)|dx<\infty, \quad \int_{-\infty}^{\infty} |w_s'(x)|dx<\infty.
\label{acotdersbb}
\end{eqnarray}
(Obviously, the functions in (\ref{sw3}) satisfy this.)

On the other hand, if the starting procedure has order $r+1$, the coefficients $E_{r,i,\Gamma \Gamma}$, $E_{r,i,\Gamma W}$, $E_{r,i, W \Gamma}$ and $E_{r,i, W W}$ vanish. Therefore, if the method is symmetric, considering that (\ref{smoothbb}) vanishes, the error in the Hamiltonian behaves as $O(\Delta t^{r+1})$ for $t-t_0=O(\Delta t^{-1})$.

Moreover, if the starting procedure has order $r+2$, according to the end of Section \ref{secprel}, $E_{r+1,i,\Gamma W}\equiv E_{r+1,i,W \Gamma}\equiv 0$ and the initial conditions $E_{r+1,i,\Gamma \Gamma}(t_0)$ and $E_{r+1,i,W W}(t_0)$ satisfy system (\ref{icrm2}) where $p$ and $q$ should be substituted by $\Gamma_N$ and $W_N$. By assuming now that $\eta_s$ and $w_s$ in (\ref{simh}) are even, the $(r+1)$- time derivatives of $\eta$ and $w$ in (\ref{simh}) are odd functions in the space variable when $r$ is even. Because of that, if $D_N$ is the matrix which reverses the order of the components of a vector, it is natural that
$$
D_N \Gamma_N^{(r+1)}(t_0)=-\Gamma_N^{(r+1)}(t_0), \quad D_N W_N^{(r+1)}(t_0)=-W_N^{(r+1)}(t_0),$$
and so it will happen that
\begin{eqnarray}
D_N E_{r+1,i,\Gamma \Gamma}(t_0)=-E_{r+1,i,\Gamma \Gamma}(t_0), \quad D_N E_{r+1,i,WW}(t_0)=-E_{r+1,i,WW}(t_0).
\label{eci}
\end{eqnarray}
We then notice that $D_N B_N^2=B_N^2 D_N$, $D_N B_N D_N=B_N$ and, for any vector $X_N$,
$$ D_N \mbox{diag}(X_N)= \mbox{diag}(D_N X_N) D_N.$$
This implies, using also that $D_N W_N=W_N$ (because $w$ is an even function), that $D_N E_{r+1,i,\Gamma \Gamma}$ and $D_N E_{r+1,i,W W}$ satisfy the same differential systems than $E_{r+1,i,\Gamma \Gamma}$ and $E_{r+1,i,WW}$, i.e. (\ref{E}). Then, from the uniqueness of the solution of an initial value problem, using the relation (\ref{eci}) between the initial conditions, it holds that
$$
D_N E_{r+1,i,\Gamma \Gamma}(t)=-E_{r+1,i,\Gamma \Gamma}(t), \quad D_N E_{r+1,i,WW}(t)=-E_{r+1,i,WW}(t).
$$
Thus, considering also (\ref{gradH}) and using that $\Gamma_N$ and $W_N$ correspond to the nodal values of an even function, the expressions
$$
\frac{l}{N} \nabla H_N(\Gamma_N,W_N)^T E_{r+1,i,\Gamma \Gamma}, \quad \frac{l}{N} \nabla H_N(\Gamma_N,W_N)^T E_{r+1,i,WW}$$
approximate the integrals in space of odd functions, which can be considered negligible.

Because of all this, when the PLMM is symmetric (which implies even order) and the starting procedure has order $r+2$, the error in the Hamiltonian is $O(\Delta t^{r+2})$ till time $O(\Delta t^{-1})$.

}

\subsubsection{Concluding results}
{All the above leads to the following:

\begin{theorem}
\label{th2}
Consider the semidiscrete system (\ref{sdbb}) with $b=d>0$ and where $N$ and $\Omega$ satisfy analogous conditions to (H1)-(H3) for $(\eta(x,t),w(x,t))$ a solitary wave solution of the form (\ref{simh}) with even and regular enough functions $\eta_s$ and $w_s$ which satisfy (\ref{asymp}) and (\ref{acotdersbb}). Assume that (\ref{sdbb}) is integrated in time by some $r$th-order, symmetric PLMM (\ref{eq12}) where $\rho_{p}, \rho_{q}$ have no common unitary roots except $x_{1}=1$, with time step $\Delta t$. Then:
\begin{enumerate}
\item[(i)] If the starting values differ from the exact ones in $O(\Delta t^r)$, the errors in $lH_N/N$ and $lI_N/N$  are $O(\Delta t^r)$ with a Landau constant independent of time for $t-t_0=O(\Delta t^{-1})$.
\item[(ii)] If the starting values differ from the exact ones in $O(\Delta t^{r+1})$, the errors in $lH_N/N$ and $lI_N/N$ are $O(\Delta t^{r+1})$ with a Landau constant independent of time for $t-t_0=O(\Delta t^{-1})$.
\item[(iii)] If the starting values differ from the exact ones in $O(\Delta t^{r+2})$, the errors in $lH_N/N$ and $lI_N/N$ are $O(\Delta t^{r+2})$ with a Landau constant independent of time for $t-t_0=O(\Delta t^{-1})$.
\end{enumerate}
\end{theorem}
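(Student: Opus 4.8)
The plan is to apply the general formula (\ref{thI}) for the error in a smooth invariant to the two cases $I=lH_N/N$ and $I=lI_N/N$, and to collect the three types of contributions it produces: the ``smooth part'' carried by the truncation-error integral (which in the present non-canonical setting must be estimated directly in the form (\ref{smoothbb}), since the canonical simplification leading to (\ref{secline}) is unavailable), the part carried by the non-common unitary roots $x_{i,p}^n,x_{i,q}^n$ multiplying the coefficients (\ref{inc}), and the starting-value term $\nabla I(P_0,Q_0)^T(E_{j,1,p}(t_0),E_{j,1,q}(t_0))$. The whole argument then consists in showing that each of these has a Landau constant independent of time on the relevant interval, using that the solution is (up to the negligible discretization errors allowed by the hypotheses analogous to (H1)--(H3)) the nodal sampling of the solitary wave (\ref{simh}) with even, exponentially decaying profiles.

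First I would dispose of the smooth part. Since the PLMM is symmetric, $c_{j,\Gamma}=c_{j,W}=0$ for odd $j$, so in (\ref{smoothbb}) only the even indices $j=2k$ survive; and for each fixed time $u$ the integrand of the resulting expression (\ref{erhbb}) is, because $\eta_s$ and $w_s$ are even, an odd function of $x$ about $c_su+x_0$, whence the inner spatial integral and hence the whole contribution vanishes. This is the structural fact that makes the higher-order statements (ii)--(iii) possible, so I would set it down carefully.

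Next I would bound the root-carried terms (\ref{inc}). By the material recalled at the end of Section \ref{secprel} this reduces to the uniform-in-time boundedness of the transition matrices of the linear systems (\ref{E}) together with the Jacobians (\ref{fpgq}), the remaining ones $f_W$, $g_\Gamma$, and all their time derivatives evaluated along the solitary wave. The transition matrices are the main obstacle: the operators involved, $(I_N-dB_N^2)^{-1}[\mbox{diag}(B_NW_N)+\mbox{diag}(W_N)B_N]$, need not be small in norm, so I would pass to the continuous analogue $e_t=-\lambda(1-d\partial_x^2)^{-1}\partial_x(we)$, use the convolution representation (\ref{et}) with the kernel $k_d$, estimate $\int|k_d(x-y)w(y,s)|\,dy$ by splitting on the sign of $x-c_ss-x_0$ and invoking the exponential decay (\ref{cotaw}) of $w$ in the travelling frame, and close with Gronwall's lemma to reach (\ref{bound1}) and thence a bound $\|e(\cdot,t)\|_\infty\le\|e(\cdot,t_0)\|_\infty e^{2C'/(\min\{\alpha,1/\sqrt d\}c_s)}$, independent of $t$; the case $\alpha=1/\sqrt d$ is handled by the same scheme with the modified formulas. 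Regularity in time of $f_\Gamma,f_W,g_W$ is inherited from that of $\Gamma_N(t),W_N(t)$, while $g_\Gamma$ does not depend on the state; together with the integrability of $\eta_s'$, $w_s'$ from (\ref{acotdersbb}), which keeps $l\|\nabla H_N\|_1/N$ bounded, this gives the first line of (\ref{thI}) for $I=lH_N/N$ a time-independent constant, and (i) follows with errors $O(\Delta t^r)$ for $t-t_0=O(\Delta t^{-1})$.

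Finally I would upgrade the order exactly via the remarks of Section \ref{secprel}. If the starting procedure has order $r+1$, the coefficients $E_{r,i,\cdot}$ vanish, and since the smooth part has just been shown to vanish, the leading surviving contribution is $j=r+1$, yielding (ii). If the starting procedure has order $r+2$, then $E_{r+1,i,\Gamma W}\equiv E_{r+1,i,W\Gamma}\equiv0$ and the initial data for $E_{r+1,i,\Gamma\Gamma},E_{r+1,i,WW}$ obey (\ref{icrm2}) (with $p,q$ replaced by $\Gamma_N,W_N$); using that $r$ is even, that $\eta_s,w_s$ are even, and the commutation identities $D_NB_N^2=B_N^2D_N$, $D_NB_ND_N=B_N$, $D_N\mbox{diag}(X_N)=\mbox{diag}(D_NX_N)D_N$, $D_NW_N=W_N$, I would check that $D_NE_{r+1,i,\Gamma\Gamma}$ and $D_NE_{r+1,i,WW}$ solve the same systems (\ref{E}) with initial data negated as in (\ref{eci}), hence by uniqueness $D_NE_{r+1,i,\Gamma\Gamma}(t)=-E_{r+1,i,\Gamma\Gamma}(t)$ and likewise for $WW$; contracting with $\nabla H_N$ (see (\ref{gradH})) then produces spatial integrals of odd functions, negligible, so the $j=r+1$ term drops out and the Hamiltonian error is $O(\Delta t^{r+2})$, which is (iii). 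For the invariant $I$ I would simply note that at the solitary wave $\nabla H_N$ is proportional to $\nabla I_N$ by the relative-equilibrium identity (\ref{RE2b}), so every bound above transfers to $lI_N/N$ unchanged.
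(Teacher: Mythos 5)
Your proposal is correct and follows essentially the same route as the paper: vanishing of the smooth truncation-error contribution (\ref{smoothbb}) by the odd-symmetry of the integrands in (\ref{erhbb}), boundedness of the transition matrices of (\ref{E}) via the kernel representation (\ref{et}), the travelling-frame exponential decay and Gronwall, and the order upgrades through the vanishing of $E_{r,i,\cdot}$ and the parity argument with $D_N$ for $E_{r+1,i,\Gamma\Gamma}$, $E_{r+1,i,WW}$, with the transfer to $lI_N/N$ by the relative-equilibrium proportionality (\ref{RE2b}). No substantive differences from the paper's own proof.
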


\begin{remark}
\label{remark32}
We notice that, in the non-symmetric case, terms with even $(j+1)$-derivatives in (\ref{smoothbb}) should also be considered. The corresponding integrands similar to (\ref{erhbb}) would then be even instead of odd and so the integral in space would necessarily not vanish. This explains that, with non-symmetric methods, the growth of the error in the invariants is linear from the beginning when $r$ is odd and is bounded at the beginning when $r$ is even, cf. Section \ref{sec3}.

\end{remark}}

%From all this, the following conclusion follows
%
%\begin{theorem}
%Let us consider system (\ref{bb}) with $b,d>0$ and that the initial condition corresponds to that of a solution of the form (\ref{simh}) for some even and regular enough functions $h_{\eta}$ and $h_w$ and a constant $c$ and where $w$ satisfies (\ref{cotaw}) for other constants $C$ and $\alpha$. Assuming that $N$ is large enough so that the error in the pseudospectral space discretization is negligible and that a partitioned linear multistep method is used for the time integration where both methods are symmetric, convergent of order $r$ and have no common roots except for $x_1=1$, it happens that
%\begin{enumerate}
%\item[(i)] If the starting values differ from the exact ones in $O(h^r)$, the errors in $(b-a)H_N/N$ (just for the case $b=d$), $(b-a)I_N/N$, $(b-a)M_{1,N}/N$ and $(b-a)M_{2,N}/N$  are $O(h^r)$ with a Landau constant independent of time for $t=O(h^{-1})$.
%\item[(ii)] If the starting values differ from the exact ones in $O(h^{r+1})$, the errors in $(b-a)H_N/N$ (just for the case $b=d$), $(b-a)I_N/N$, $(b-a)M_{1,N}/N$ and $(b-a)M_{2,N}/N$  are $O(h^r)$ with a Landau constant independent of time for $t=O(h^{-2})$.
%\end{enumerate}
%\end{theorem}

\section{Numerical experiments}
\label{sec3}
In this section, a computational study of the use of PLMMs for the semidiscrete systems (\ref {nlsdis}) and (\ref{sdbb}) is performed. Part of the experiments will include comparisons among different methods and to this end we will consider three LMMs, as in \cite{CDR1}:
\begin{enumerate}
\item
The symmetric PLMM of second order (SPLMM2, \cite{CH})
\begin{eqnarray*}
\rho_p(x)&=&(x-1)(x+1), \quad \sigma_p(x)=2x, \nonumber \\
\rho_q(x)&=&(x-1)(x^2+1), \quad \sigma_q(x)=x^2+x. \label{plmm2}
\end{eqnarray*}
%\item[2.]
%The symmetric, nonpartitioned LMM
%\begin{eqnarray}
%\rho_p(x)=\rho_q(x)=(x-1)(x+1), \quad \sigma_p(x)=\sigma_q(x)=2x. \label{lmm2}
%\end{eqnarray}
%\item
%The nonsymmetric, nonpartitioned third-order Adams method (NSNPLMM3)
%\begin{eqnarray*}
%\rho_p(x)=\rho_q(x)=x^2(x-1), \quad \sigma_p(x)=\sigma_q(x)=\frac{23}{12}x^2-\frac{16}{12}x+\frac{5}{12}. \label{adams3}
%\end{eqnarray*}
\item
The nonsymmetric PLMM where one of the methods is a symmetric LMM and the other one is the second-order Adams method (NSPLMM2)
\begin{eqnarray*}
\rho_p(x)&=&(x-1)(x+1), \quad \sigma_p(x)=2x, \nonumber \\
\rho_q(x)&=&x(x-1), \quad \sigma_q(x)=\frac{3}{2}x-\frac{1}{2}. \label{sim_nosim}
\end{eqnarray*}
\item
The nonsymmetric, nonpartitioned third-order Adams method (NSNPLMM3)
\begin{eqnarray*}
\rho_p(x)=\rho_q(x)=x^2(x-1), \; \sigma_p(x)=\sigma_q(x)=\frac{23}{12}x^2-\frac{16}{12}x+\frac{5}{12}. \label{adams3}
\end{eqnarray*}
\end{enumerate}
For simplicity, the experiments were made with intervals $\Omega=(-L,L)$ (so $l=2L$) with $L=1024, N=8192$ and $L=2048, N=16384$, for which $\Delta x=1.25\times 10^{-1}$. (It was checked experimentally that this was enough to satisfy the hypotheses (H1) and (H2) for both systems.) The evolution of the numerical solution is monitored up to a final time $T=800$.
%We observed that the numerical approximation of (\ref{nlsdis}) is affected by harder stability restrictions than thiose of the integration of (\ref{sdbb}). This determines the choice of the time step sizes $\Delta t$ in the experiments below.

\subsection{Linear stability}

Before showing how the previous methods behave when integrating the space discretizations of NLS equation or Boussinesq system, i.e. (\ref{nlsdis}) or (\ref{sdbb}), we first notice that the linear part of both systems have the form
\begin{eqnarray}
\dot{\left( \begin{array}{cc} P \\ Q \end{array} \right)}=  \left( \begin{array}{cc} 0 & M_1 \\ M_2 & 0 \end{array} \right) \left( \begin{array}{c} P \\ Q \end{array} \right),
\label{mpq}
\end{eqnarray}
for some matrices $M_1$ and $M_2$, where the eigenvalues of the whole matrix in (\ref{mpq}) are imaginary.

In the case of NLS equation, those eigenvalues will be like  $i \lambda^2$ where $i \lambda$ is each of the eigenvalues of $B_N$. In the case of Boussinesq system, they will have the form
$$i \lambda \sqrt{\frac{(1-a \lambda^2)(1-c \lambda^2)}{(1+b\lambda^2)(1+d \lambda^2)}},$$
so when $b=d>0$ and $|\lambda|$ grows, the modulus of the corresponding eigenvalues grow in the same way. Therefore, Boussinesq system is much less stiff than NLS equation.

In any case, after a change of variables in $P$ and in $Q$, the PLMM applied to (\ref{mpq}) will behave as that applied to
\begin{eqnarray}
\dot{\left( \begin{array}{cc} p\\ q \end{array} \right)}=  \left( \begin{array}{cc} 0 & \gamma \\ \delta & 0 \end{array} \right) \left( \begin{array}{c} p \\ q \end{array} \right), \mbox{ whenever } \gamma \delta <0.
\nonumber
\end{eqnarray}
In such a case,
$$
\rho_p(E) p_n= \Delta t \gamma \sigma_p(E) q_n, \quad  \rho_q(E) q_n= \Delta t \delta  \sigma_q(E) p_n,
$$
and so
$$
\rho_p(E) \rho_q(E) p_n= \Delta t^2 \gamma \delta \sigma_p(E) \sigma_q(E) p_n,
$$
and a similar equation holds for $q_n$. Then the size of the roots of
\begin{eqnarray}
r_{\Delta t}(x)=\rho_p(x)\rho_q(x)-\Delta t^2 \gamma \delta \sigma_p(x) \sigma_q(x)
\label{rh}
\end{eqnarray}
will determine how $\{p_n\}$ and   $\{q_n\}$ grow with $n$.

\begin{enumerate}
\item
In the case that the PLMM is symmetric with no common roots of $\rho_p$ and $\rho_q$ except $x_1=1$, c.f. \cite{CH}, for small enough $\Delta t$, all the roots of $r_{\Delta t}$ also have unit modulus. This comes from the fact that, whenever $z$ is a root of $r_{\Delta t}$, $\bar{z}$ and $z^{-1}$ are also roots of $r_{\Delta t}$ and
\begin{enumerate}
\item[(i)] near each single root of $\rho_p(x) \rho_q(x)$, just one single root of $r_{\Delta t}(x)$ can turn up for small enough $\Delta t$,
\item[(ii)] near the double root $x_1=1$ of $\rho_p(x) \rho_q(x)$, the roots of $r_{\Delta t}$ maybe real and inverse of each other or conjugate of unit modulus. Denoting them by $x_{1,\Delta t}=1+\eta(\Delta t)$, substituting in (\ref{rh}) and using that $\rho_p'(1)=\sigma_p(1)$, $\rho_q'(1)=\sigma_q(1)$ (due to consistency), it follows that
    $$
    \eta(\Delta t)^2 \approx \gamma \delta \Delta t^2.$$
    As $\gamma \delta <0$, $\eta(\Delta t) \approx \pm i \sqrt{-\gamma \delta} \Delta t$ and so $x_{1,\Delta t}$ also have unit modulus and are different.
\end{enumerate}
As a conclusion, all symmetric PLMMs are linearly stable in a neighbourhood of the origin in the imaginary axis.
\item
For NSPLMM2 method, $r_{\Delta t}$ in (\ref{rh}) has a root of modulus $>1$ which behaves as $1+O(\Delta t^5)$. Then, the region of stability does not contain any neighbourhood of the origin in the imaginary axis.
\item
For the third-order Adams method, the region of stability contains a neighbourhood of the origin in the imaginary axis, as shown in \cite{HLW}.
\end{enumerate}

\subsection{Approximation of solitary waves}
\label{sec31}

\begin{figure}[htbp]
\centering
\subfigure[]
{\includegraphics[width=6.2cm]{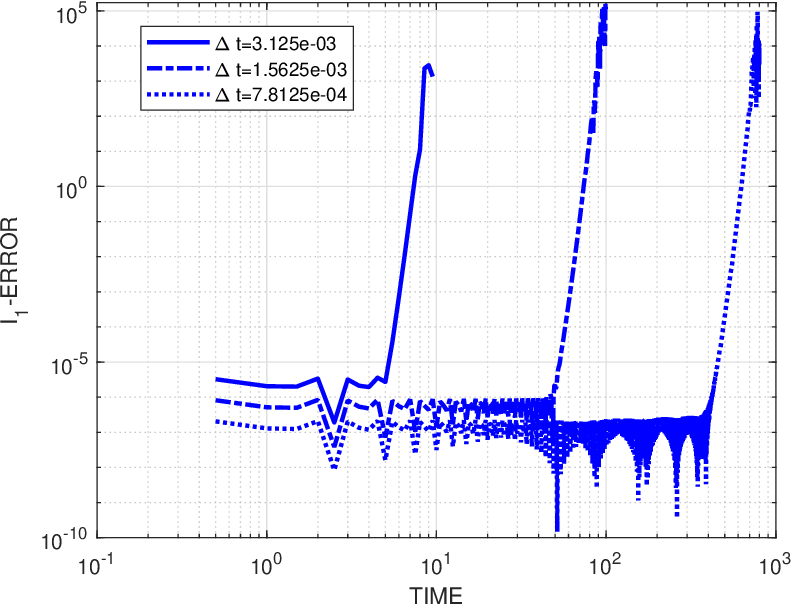}}
\subfigure[]
{\includegraphics[width=6.2cm]{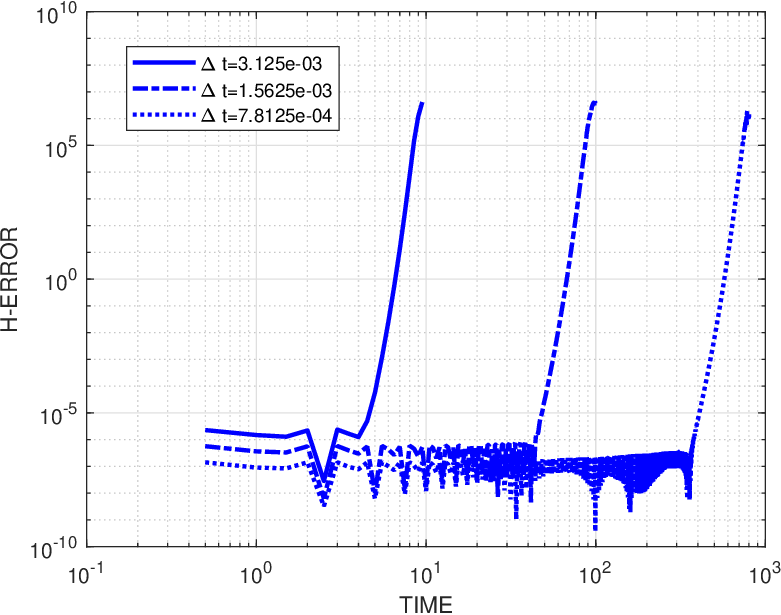}}
%\subfigure[]
%{\includegraphics[width=0.75\columnwidth]{figBFD9_2.eps}}
\caption{Error in the invariants against time with NSPLMM2  w.r.t. a solitary wave (\ref{sw}) {of the NLS equation (\ref{nls2})} with $\sigma=1$ and parameters  $a=\lambda_{0}^{2}=1, x_{0}=-400, \theta_{0}=\pi/4$. (a) Mass error. (b) Energy error. $\Delta x=1.25\times 10^{-1}$.}
\label{FIG1}
\end{figure}

As mentioned in the introduction, two groups of experiments will be made for each case study. The first one aims at illustrating the previous results with numerical examples corresponding to the approximation of solitary wave solutions.

%The time evolution of the error with respect to the soliton solution (\ref{sw}) with $\sigma=1, a=\lambda_{0}^{2}=1, x_{0}=-400, \theta_{0}=\pi/4$, in the Euclidean norm and for the thrre time integrators is shown, in log-log scale, in Figure \ref{cd_FIG1}
%\begin{figure}[htbp]
%\centering
%\subfigure[]
%{\includegraphics[width=6.2cm]{CIEM2025_FIG1.eps}}
%\subfigure[]
%{\includegraphics[width=6.2cm]{CIEM2025_FIG2.eps}}
%%\subfigure[]
%%{\includegraphics[width=0.75\columnwidth]{figBFD9_2.eps}}
%\caption{Time evolution of the $L^{2}$-error w.r.t. a solitary wave. (a) NSNPLMM3 vs. PLMM2. (b) NSPLMM2 vs. PLMM2: $\Delta x=1.25\times 10^{-1}, \Delta t=1.5625\times 10^{-3}$.}
%\label{cd_FIG1}
%\end{figure}
%In Figure \ref{cd_FIG1}(a) we compare SPLMM2 with NSNPLMM3, and in  Figure \ref{cd_FIG1}(b) the comparison is between NSPLMM2 and SPLMM2. We first observe that the errors of the $3$rd order method grow quadratically with time. In the case of NSPLMM2, the growth is linear with time and this is a bit less than linear in the case of SPLMM2. As far as the growth with time of the error, then the property of symmetry matters, \cite{CDR1}.
For the first experiments concerning the NLS equation (\ref{nls2}), we consider a soliton solution (\ref{sw}) for the case $\sigma=1$ and with parameters  $a=\lambda_{0}^{2}=1, x_{0}=-400, \theta_{0}=\pi/4$.

%These first figures let us dismiss NSNPLMM3 and we focus on the other two methdos, in order to study the influence of the symmetry property in more detail. In Figure \ref{cd_FIG2}
\begin{figure}[htbp]
\centering
\subfigure[]
{\includegraphics[width=6.2cm]{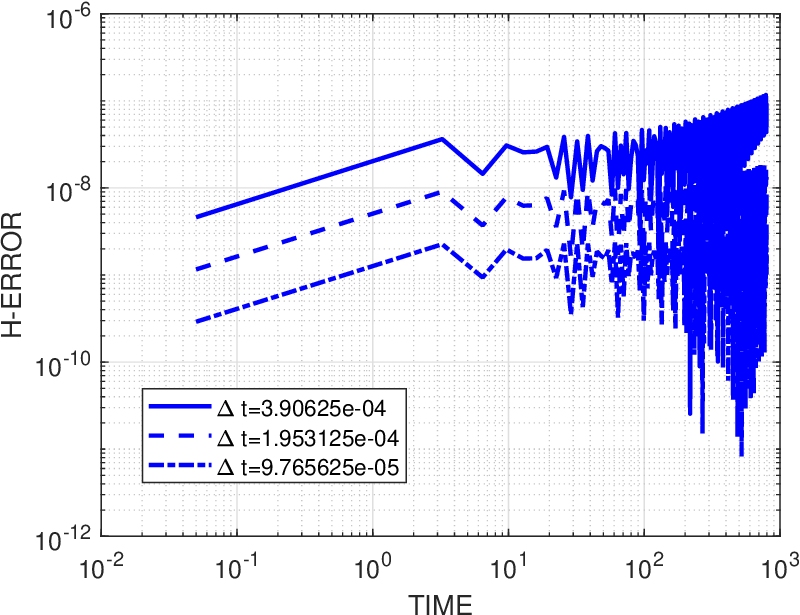}}
\subfigure[]
{\includegraphics[width=6.2cm]{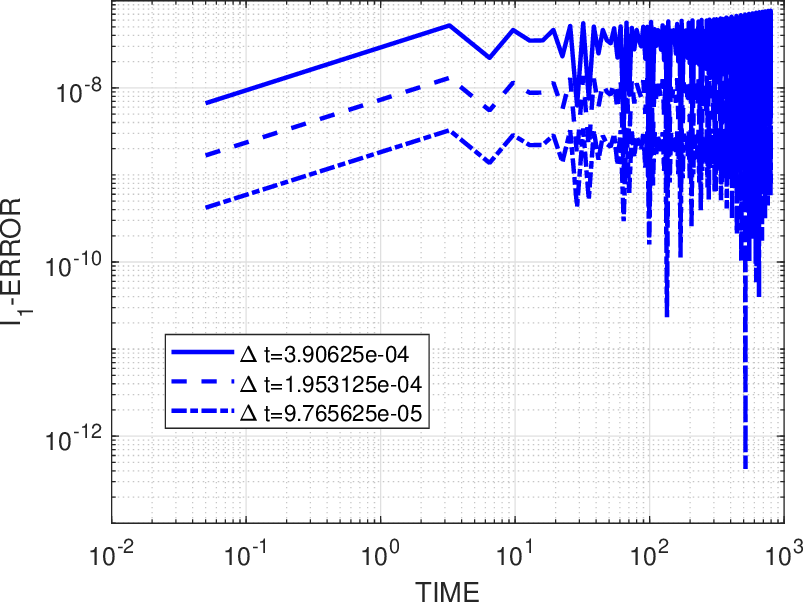}}
%\subfigure[]
%{\includegraphics[width=0.75\columnwidth]{figBFD9_2.eps}}
\caption{Error in the invariants against time with NSPLMM2 w.r.t. a solitary wave (\ref{sw}) {of the NLS equation (\ref{nls2})} with $\sigma=1$ and parameters  $a=\lambda_{0}^{2}=1, x_{0}=-400, \theta_{0}=\pi/4$. (a) Mass error. (b) Energy error. $\Delta x=1.25\times 10^{-1}$.}
\label{FIG2}
\end{figure}

The time behaviour of the error (in log-log scale) with respect to the mass $l I_{1,N}/N$ and the Hamiltonian $l H_{N}/N$, conserved quantities of solutions of the semidiscrete system (\ref{nlsdis}), given by NSPLMM2 for several step sizes, is shown in Figure \ref{FIG1}. The errors are observed to show order $2$ when the time-stepsize diminishes and to behave oscillatory (but in a bounded way) till a certain time in which they begin to grow exponentially. The time in which the type of behaviour changes grows a lot when $\Delta t$ diminishes and thus it seems to be associated to the lack of linear stability of this method around the origin at the imaginary axis, which was commented in the previous subsection. When the time step is further reduced, that exponential growth is not observed any more and the growth of error on the invariants is bounded till a certain time $t\approx 100$ (for $\Delta t=3.90625e-4$) and then begins to grow a bit less than linearly, as Figure \ref{FIG2} shows. This is in accordance with Remark \ref{rem1}, which explains that the term in the error associated to $\Delta t^2$ is bounded and that associated to $\Delta t^3$ may grow linearly.

\begin{figure}[htbp]
\centering
\subfigure[]
{\includegraphics[width=6.2cm]{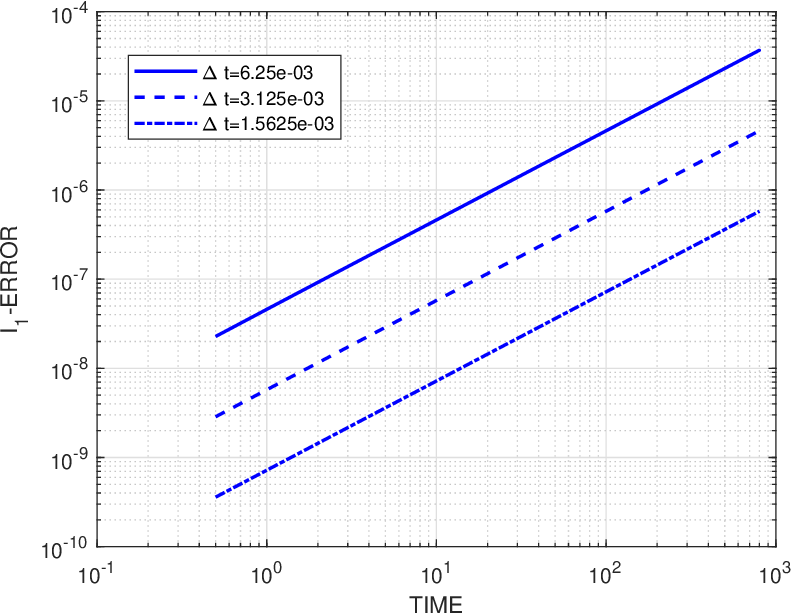}}
\subfigure[]
{\includegraphics[width=6.2cm]{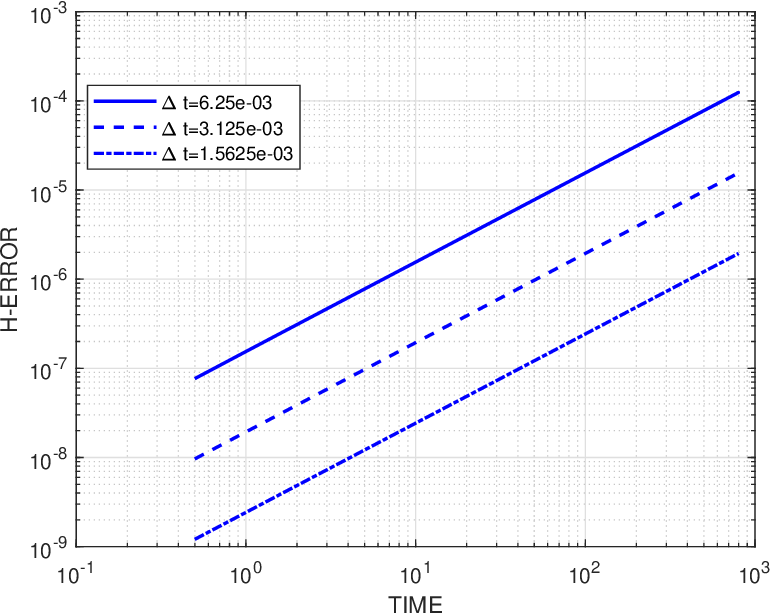}}
\subfigure[]
{\includegraphics[width=6.2cm]{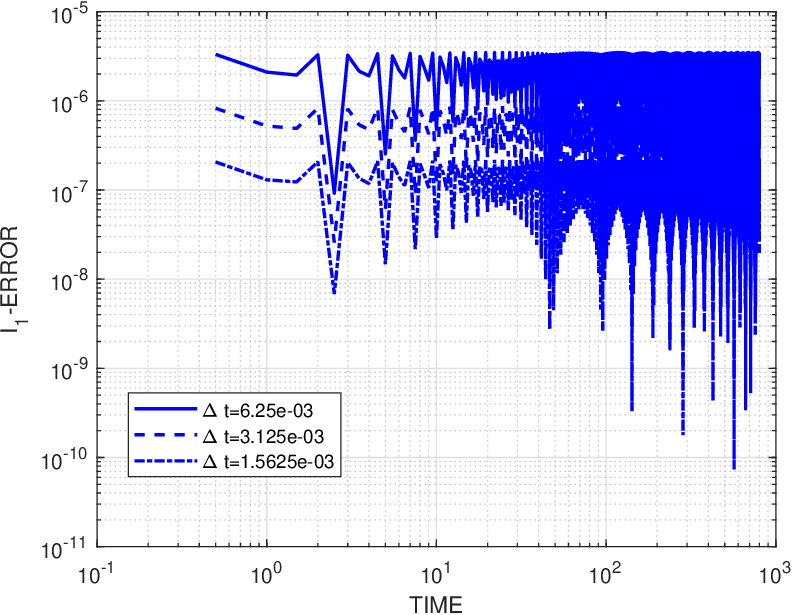}}
\subfigure[]
{\includegraphics[width=6.2cm]{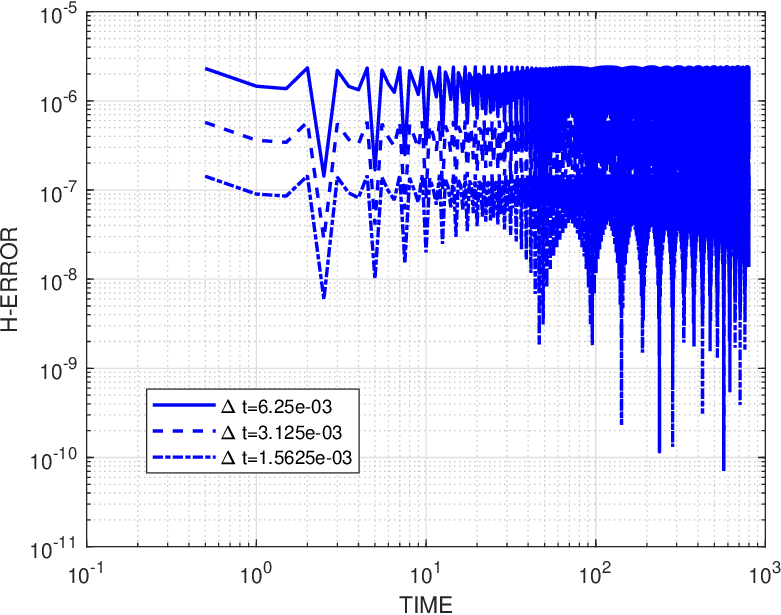}}
%\subfigure[]
%{\includegraphics[width=0.75\columnwidth]{figBFD9_2.eps}}
\caption{Error in the invariants against time w.r.t. a solitary wave (\ref{sw}) {of the NLS equation (\ref{nls2})} with $\sigma=1$ and parameters  $a=\lambda_{0}^{2}=1, x_{0}=-400, \theta_{0}=\pi/4$. (a), (b) Mass and energy error for NSNPLMM3. (c), (d) Mass and energy error for SPLMM2. $\Delta x=1.25\times 10^{-1}$.}
\label{FIG3}
\end{figure}

 The previous experiments can be contrasted with the results corresponding to the more linearly stable NSNPLMM3 and SPLMM2, and shown in Figure \ref{FIG3} for the biggest time stepsizes. Observe that, while NSNPLMM3 develops a linear error growth in the evolution of the invariants, these errors are bounded in the case of SPLMM2 up to the final time of simulation, confirming the results in Theorem \ref{th1} and Remark \ref{rem1}.

 Note also that the relative equilibrium condition (\ref{RE})
will ensure that the error with respect to the second invariant $I_{2,N}$ shows a similar behaviour. For this reason the corresponding results will not be shown here.

%In Figure \ref{cd_FIG2} we compare the time behaviour of the error in the Hamiltonian $H_{N}$ and the mass $I_{1,N}$ of the semidiscrete system (\ref{nlsdis}) given by the three schemes with $\Delta t=1.5625\times 10^{-3}$, when approximating a soliton solution (\ref{sw}) with $\sigma=1$, and parameters  $a=\lambda_{0}^{2}=1, x_{0}=-400, \theta_{0}=\pi/4$. Observe that while NSNPLMM3 and  NSPLMM2 show a linear error growth, the errors are bounded in the case of SPLMM2 up to the final time of simulation, confirming the result of Theorem \ref{th1} and Remark \ref{rem1}. We also notice that in method NSPLMM2, which is of second order, the coefficient of the error in $\Delta t^2$ is bounded while that of $\Delta t^3$ grows linearly. The linear growth seems dominant with respect to the bounded one, at least for the considered value of $\Delta t$. In any case, the growth slope is smaller than that obtained with the third-order method. Note that the relative equilibrium condition (\ref{RE})
%will ensure that the error with respect to the second invariant $I_{2,N}$ shows a similar behaviour.

In the following experiments we show the results for the case of the Boussinesq family of systems
(\ref{bb})  and the simulation of solitary wave solutions, cf. (\ref{simh}), (\ref{sw3}).
By way of illustration, we consider the parameters
\begin{eqnarray}
\theta^{2}=9/11, b=d=\frac{1}{2}(\theta^{2}-1/3), c=2/3-\theta^{2}, a=0,
\label{param}
\end{eqnarray}
corresponding to the so-called Bona-Smith systems, \cite{DougalisM2008}, and the solitary wave
with $x_{0}=-400$ and speed $c_{s}=1.4434$.

\begin{figure}[htbp]
\centering
\subfigure[]
{\includegraphics[width=6.2cm]{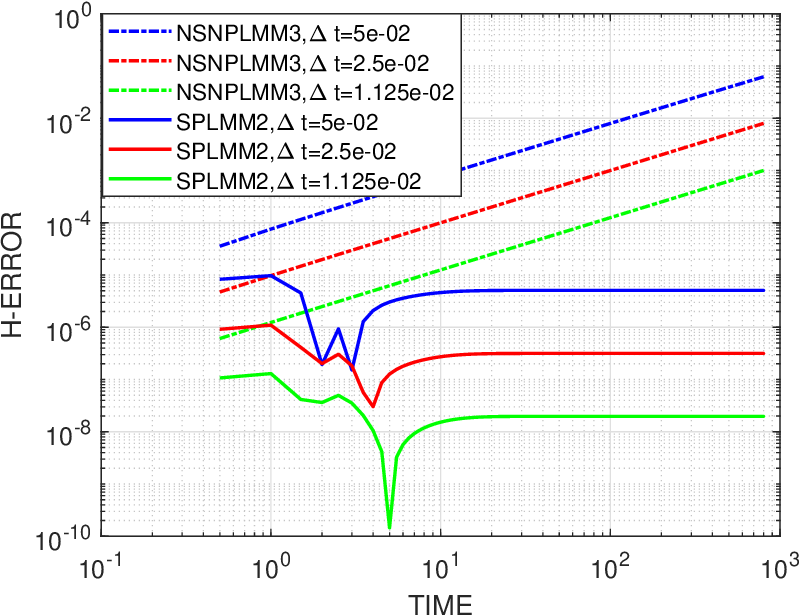}}
\subfigure[]
{\includegraphics[width=6.2cm]{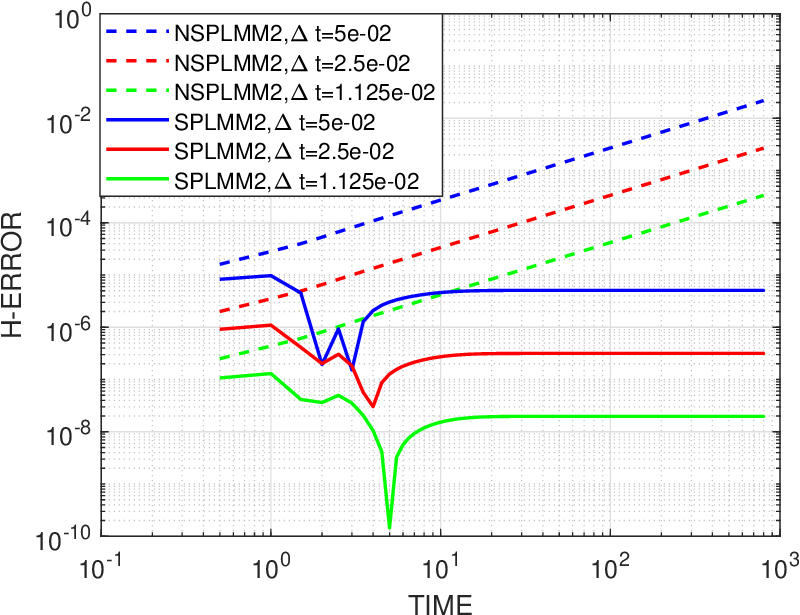}}
%\subfigure[]
%{\includegraphics[width=0.75\columnwidth]{figBFD9_2.eps}}
\caption{Error in the Hamiltonian against time  w.r.t. a solitary wave {of Boussinesq system (\ref{bb})} with $\theta^{2}=9/11, b=d=\frac{1}{2}(\theta^{2}-1/3), c=2/3-\theta^{2}, a=0$ . $\Delta x=1.25\times 10^{-1}$. (a) Dashed lines: NSNPLMM3; solid lines: SPLMM2. (b) Dashed lines: NSPLMM2; solid lines: SPLMM2.}
\label{FIG4}
\end{figure}

\begin{figure}[htbp]
\centering
\subfigure[]
{\includegraphics[width=0.5\columnwidth]{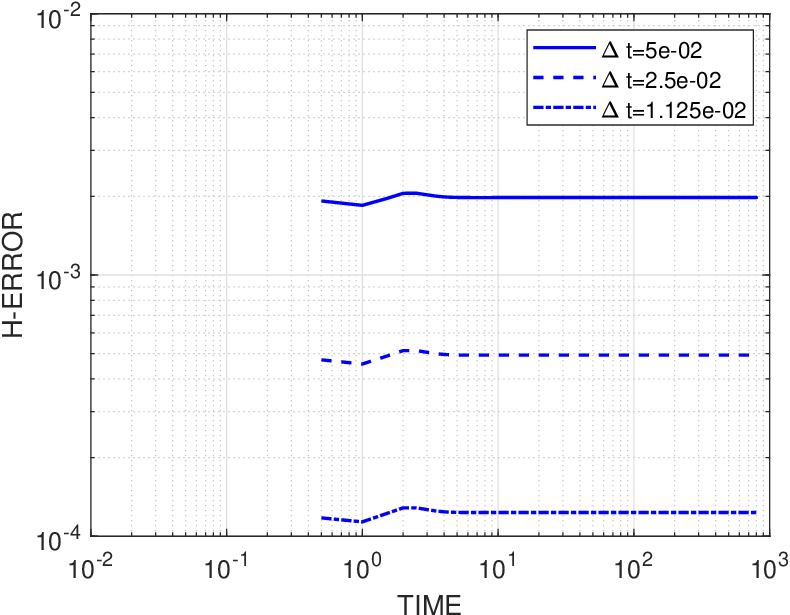}}
%\subfigure[]
%{\includegraphics[width=6.2cm]{LMM2_fig5b.eps}}
%\subfigure[]
%{\includegraphics[width=0.75\columnwidth]{figBFD9_2.eps}}
\caption{Error in the Hamiltonian against time  w.r.t. a solitary wave {of Boussinesq system (\ref{bb})} with $\theta^{2}=9/11, b=d=\frac{1}{2}(\theta^{2}-1/3), c=2/3-\theta^{2}, a=0$. SPLMM2 with starting values obtained from the implicit midpoint rule.}
\label{FIG4bis}
\end{figure}

When the starting values are taken as exact, the time evolution of the error in the Hamiltonian $H_{N}$ with respect to this solitary wave and for the three time integrators, is shown, in log-log scale, in Figure \ref{FIG4}. In Figure \ref{FIG4}(a) we compare SPLMM2 with NSNPLMM3, and in  Figure \ref{FIG4}(b) the comparison is between NSPLMM2 and SPLMM2. In Figure \ref{FIG4}(a), we observe the linear growth of the errors corresponding to NSNPLMM3, against the bounded behaviour of those for SPLMM2. It is remarkable that the error with SPLMM2 diminishes like $O(\Delta t^4)$ in spite of being a method of order $2$, as stated in part (iii) of Theorem \ref{th2}. On the other hand, the evolution of the error given by NSPLMM2, shown in Figure \ref{FIG4}(b), reveals a linear growth with time from the very beginning which, in constrast to Figure \ref{FIG2} for NLS equation, comes from the fact that the term of the error associated to $\Delta t^2$ vanishes for the solitary wave solution of Boussinesq system due to the same arguments as with SPLMM2. As for the term in $\Delta t^3$ associated to the smooth part of the numerical solution, it  grows linearly according to Remark \ref{remark32}. This explains both the linear growth from the very beginning in this case and the order $3$ when the time stepsize diminishes.

We notice that in case the starting values are calculated with order $2$ using the implicit midpoint rule,  the term of the error in the Hamiltonian in $\Delta t^2$ associated to the non-smooth part of the numerical solution does not vanish any more, and thus order $2$ is observed, as shown in Figure \ref{FIG4bis} for SPLMM2 method.

As in the previous example, the relative equilibrium condition (\ref{RE2b}) or, equivalently, (\ref{RE2}), implies that the errors with respect to the quadratic invariant $I_{N}$ evolve in a similar way. We also computed the time behaviour of the errors with respect to the linear invariants $M_{1,N}, M_{2,N}$, and they are below machine precision in all the cases; therefore, the three schemes virtually preserve the two quantities, as expected by Remark \ref{remlin}.

\subsection{Approximation of other localized solutions: perturbations of solitary waves and resolution property}
\label{sec32}
A second question considered at this point is if  the good behaviour of the symmetric method SPLMM2 can be extended to the simulation of other localized waves. This will be explored in this section with several numerical examples. In the first one we study, by computational means, the performance of SPLMM2 when simulating small perturbations of solitary wave solutions of (\ref{nls}). To this end, we consider the cubic case ($\sigma=1$) and the soliton solution profile $u_{0}=(p_{0},q_{0})$ given by (\ref{sw}) with
$a=\lambda_{0}^{2}=1, x_{0}=-400, \theta_{0}=\pi/4$ at $t=0$. This is slightly perturbed to generate a profile $\widetilde{u}_{0}=(\widetilde{p}_{0},\widetilde{q}_{0})$ with
\begin{eqnarray}
\widetilde{p}_{0}=A_{1}p_{0},\quad \widetilde{q}_{0}=A_{2}q_{0},\label{swpert}
\end{eqnarray}
for perturbation factors $A_{j}, j=1,2$. Then the numerical solution given by SPLMM2 to approximate (\ref{nlsdis}) with (\ref{swpert}) as initial condition,  with a starting procedure of the same order, is monitored. For the values $A_{1}=A_{2}=1.05$, the initial perturbed wave evolves to a solution with two components: one is a waveform which tends asymptotically to a modified solitary wave, with speed and amplitude close to those of the original, unperturbed one (cf. Figure \ref{cd_FIG6a}). The second component may include dispersive and small amplitude waves, and tends to zero as time evolves (cf. Figure \ref{cd_FIG6b}). This illustrates the asymptotic stability of the solitary wave, \cite{CuccagnaP}.

\begin{figure}[htbp]
\centering
\subfigure
{\includegraphics[width=4.1cm]{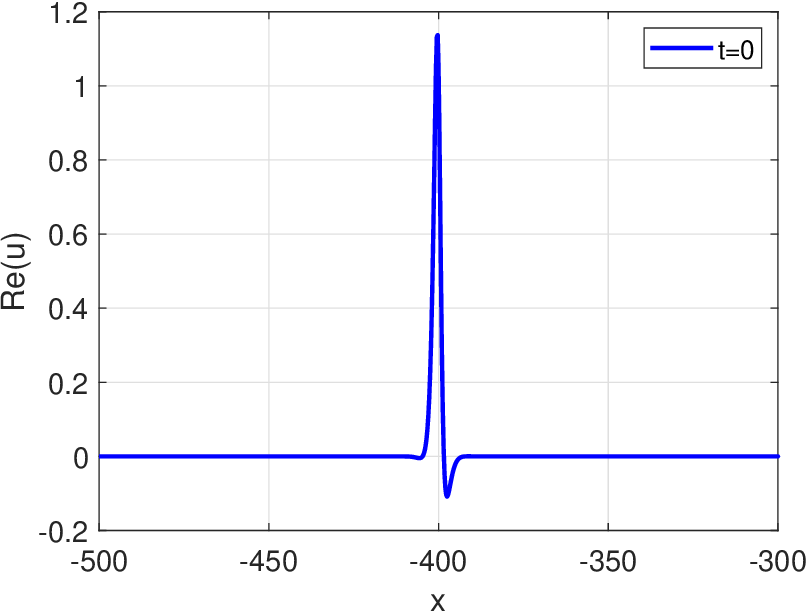}}
\subfigure
{\includegraphics[width=4.1cm]{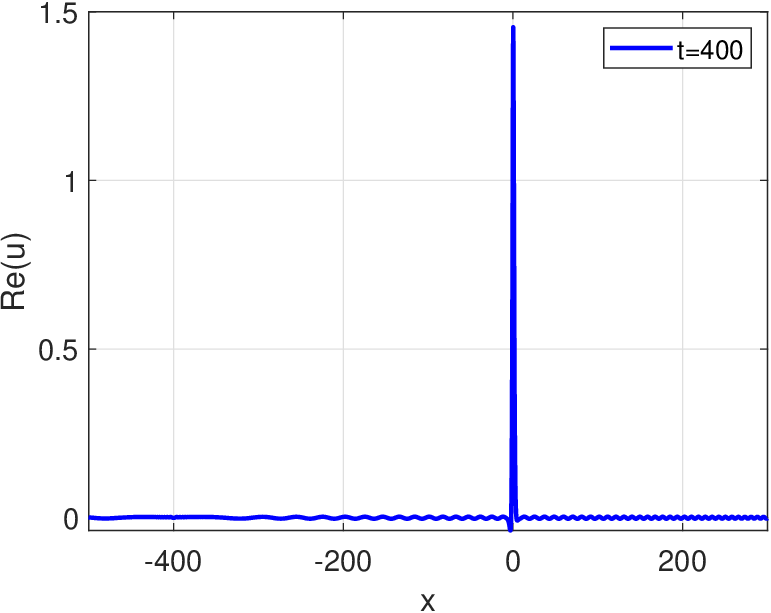}}
\subfigure
{\includegraphics[width=4.1cm]{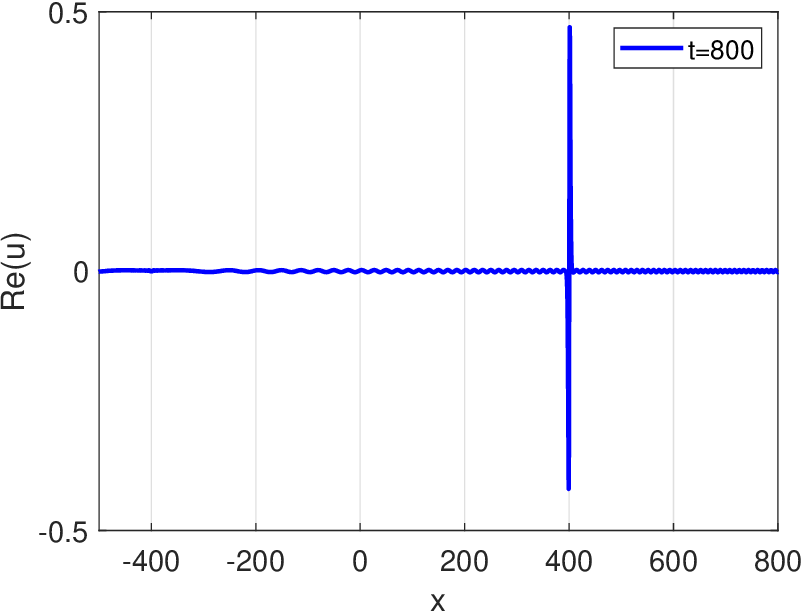}}
\subfigure
{\includegraphics[width=4.1cm]{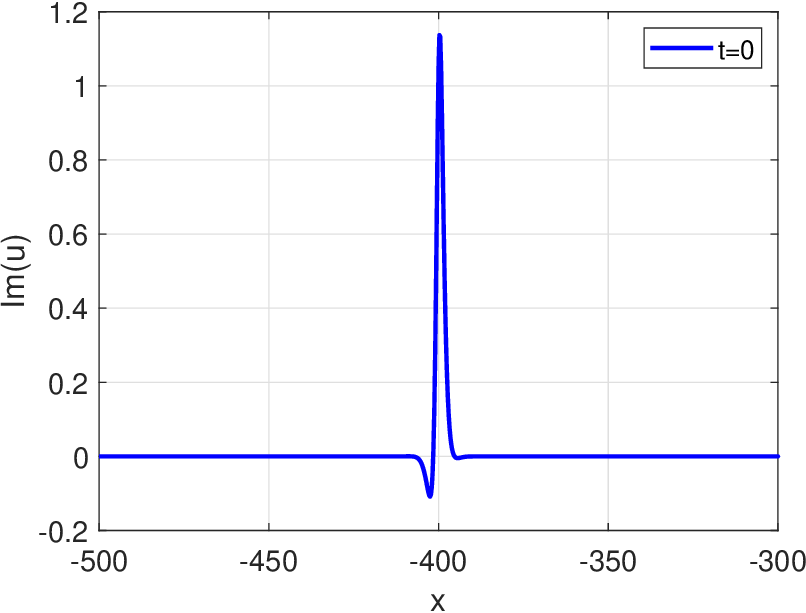}}
\subfigure
{\includegraphics[width=4.1cm]{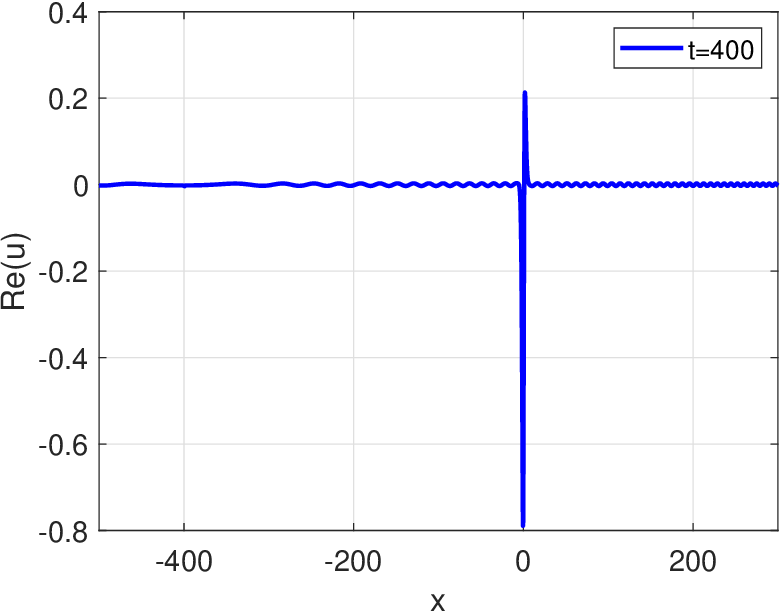}}
\subfigure
{\includegraphics[width=4.1cm]{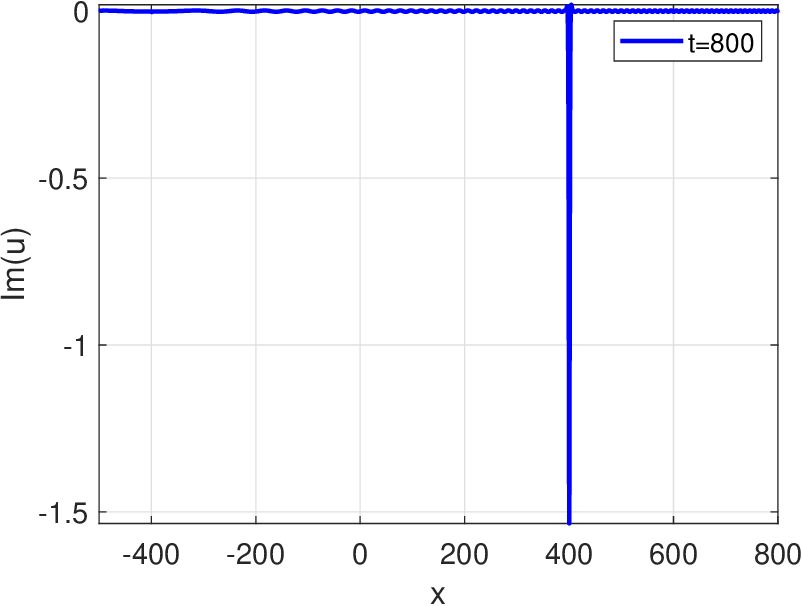}}
%\subfigure
%{\includegraphics[width=4.1cm]{cd_FIGv0.eps}}
%\subfigure
%{\includegraphics[width=4.1cm]{cd_FIGv0.eps}}
%\subfigure
%{\includegraphics[width=4.1cm]{cd_FIGv0.eps}}
\caption{$p,q$ components of the numerical solution with SPLMM2 from a slight perturbation (\ref{swpert}) of a solitary wave
{of the NLS equation} (\ref{nls}) with $A_{1}=1.05, A_{2}=1.05$ at $t=0, 400, 800$.}
\label{cd_FIG6a}
\end{figure}
%The evolution shows the asymptotic formation of a modified solitary wave profile, with amplitude and speed close to those of the original profile, along with small-amplitude oscillatory tails, of dispersive nature, in front of and behind the main wave, see Figure \ref{cd_FIG6b}. \textcolor{red}{Imagino que lo que se quiere mostrar aquí es que si el Hamiltoniano e $I$ están bien aproximados, la aproximación es estable frente a perturbaciones. No sería conveniente ver que si se toman métodos no simétricos, no se observa la misma estabilidad?}
\begin{figure}[htbp]
\centering
%\subfigure
%{\includegraphics[width=5cm]{ISAAC2025_fig1a.eps}}
\subfigure
{\includegraphics[width=6.2cm]{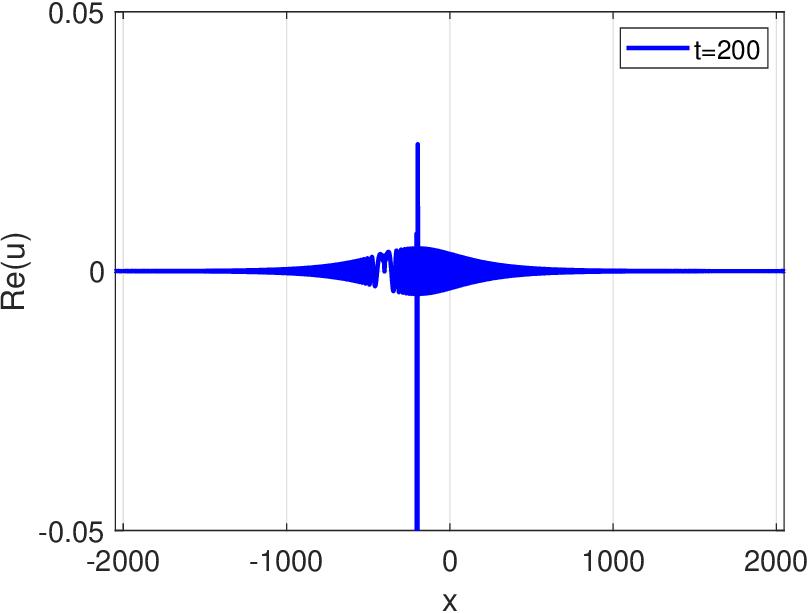}}
\subfigure
{\includegraphics[width=6.2cm]{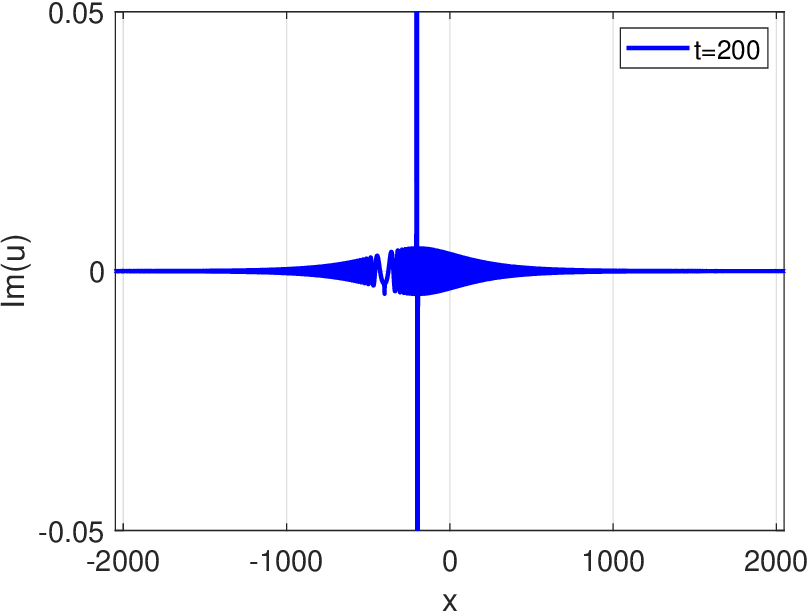}}
%\subfigure
%{\includegraphics[width=5cm]{ISAAC2025_fig1d.eps}}
\caption{$p,q$ components of the numerical solution with SPLMM2 from a slight perturbation (\ref{swpert}) of a solitary wave {of the NLS equation} (\ref{nls}) with $A_{1}=1.05, A_{2}=1.05$, at $t=200$ (magnified).}
\label{cd_FIG6b}
\end{figure}
%The behaviour observed in Figures \ref{cd_FIG6a} and \ref{cd_FIG6b} illustrates the asymptotic stability of the solitary wave, \cite{CuccagnaP}.

In Figure \ref{cd_FIG7} the time behaviour of the error in mass, momentum, and energy quantities for SPLMM2, with several time step sizes, suggests that the good performance of the symmetric method with respect to the soliton solutions, depicted in Section \ref{sec31}, can be extended to the simulations of close localized solutions obtained from small perturbations.
\begin{figure}[htbp]
\centering
\subfigure[]
{\includegraphics[width=6.2cm]{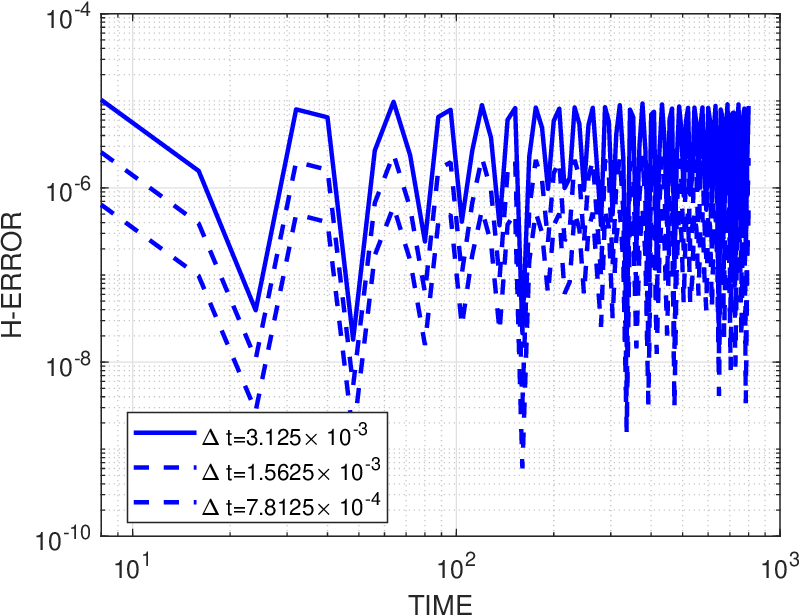}}
\subfigure[]
{\includegraphics[width=6.2cm]{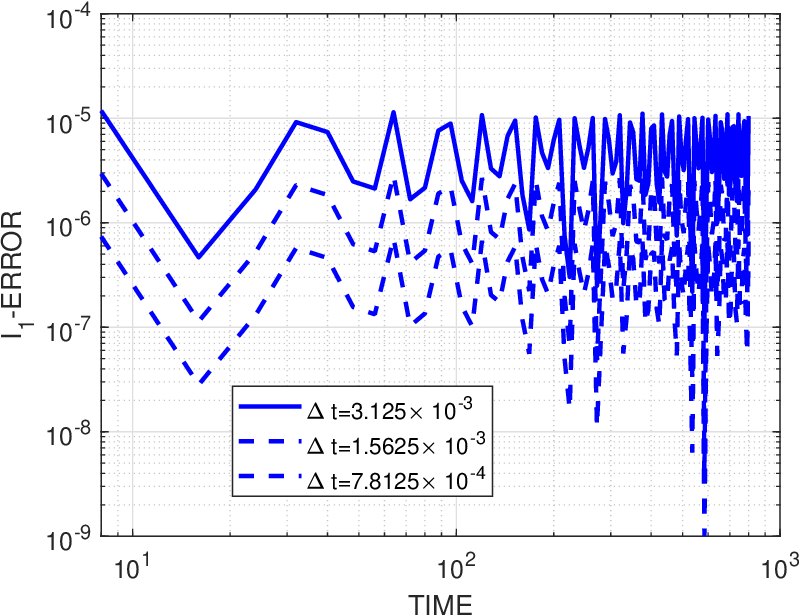}}
\subfigure[]
{\includegraphics[width=6.2cm]{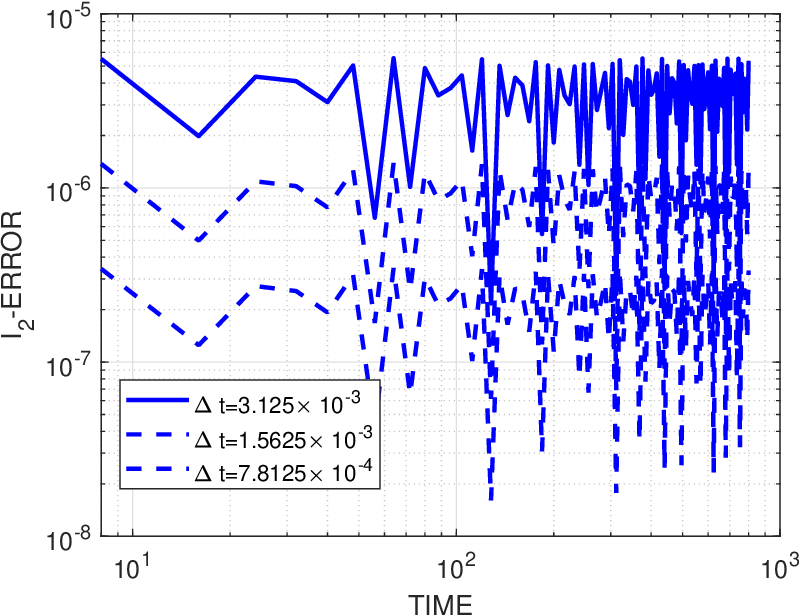}}

%\subfigure[]
%{\includegraphics[width=0.75\columnwidth]{figBFD9_2.eps}}
\caption{Error in the invariants against time {when integrating a slight perturbation (\ref{swpert}) of a solitary wave of NLS equation with $A_{1}=1.05, A_{2}=1.05$} with SPLMM2: $\Delta x=1.25\times 10^{-1}$. (a) Mass; (b) momentum; (c) energy.}
\label{cd_FIG7}
\end{figure}

A second example is concerned with the so-called resolution property for the systems (\ref{bb}) with $b=d$, see e.~g. \cite{DDS,DougalisM2008} and references therein. This is described as the property that some localized initial data can have to evolve  towards the formation of a train of solitary waves plus dispersive, small-amplitude tails.
\begin{figure}[htbp]
\centering
\subfigure[]
{\includegraphics[width=6.2cm]{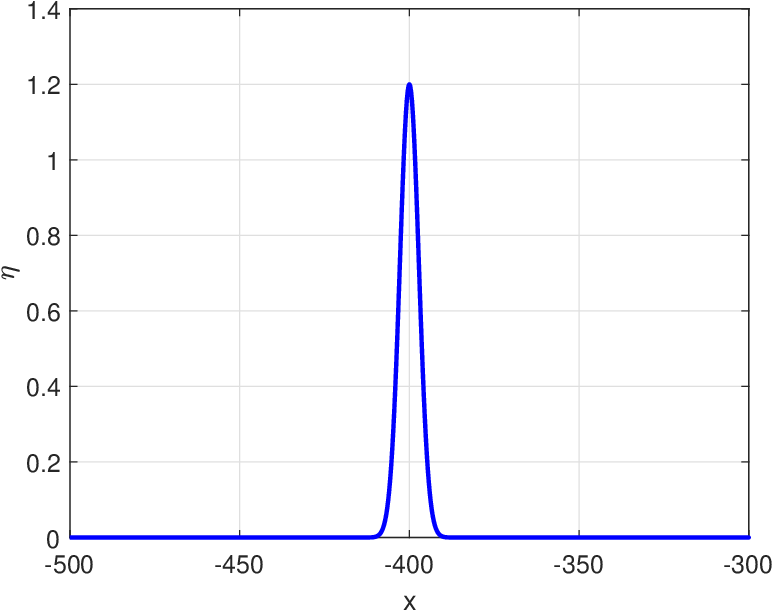}}
\subfigure[]
{\includegraphics[width=6.2cm]{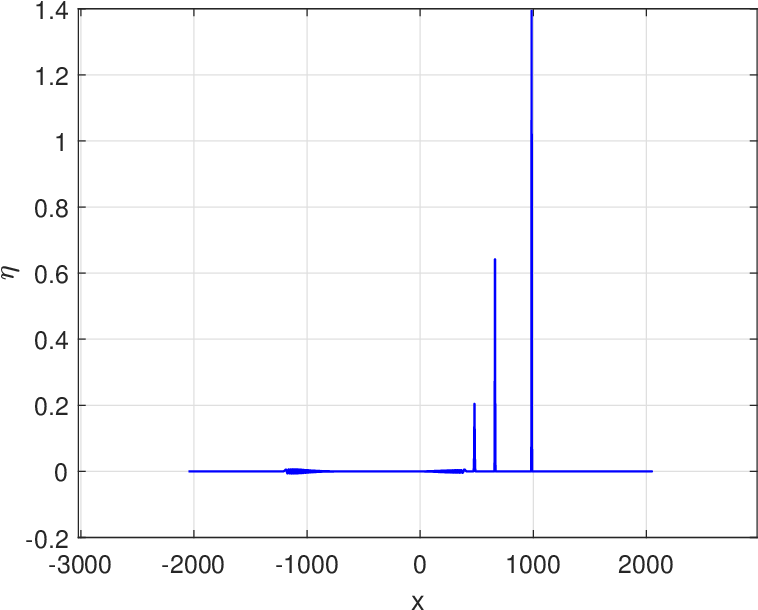}}
\subfigure[]
{\includegraphics[width=6.2cm]{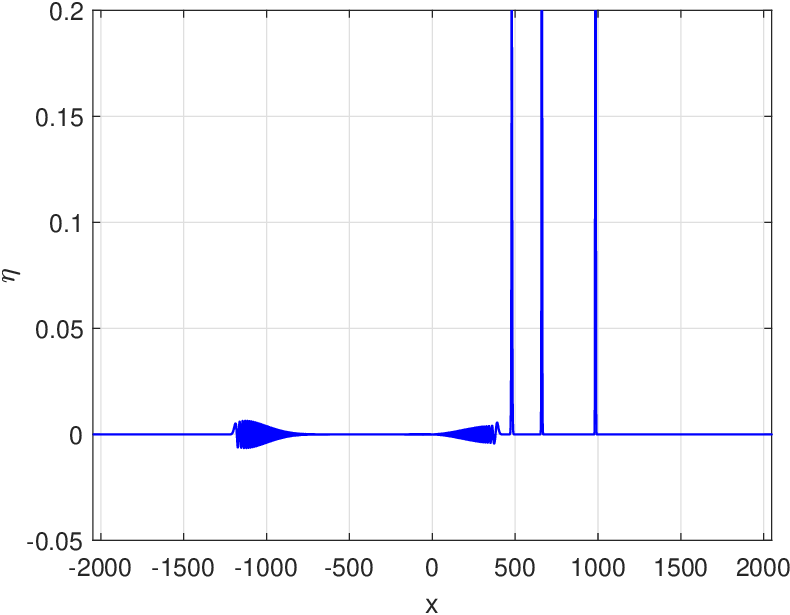}}
\subfigure[]
{\includegraphics[width=6.2cm]{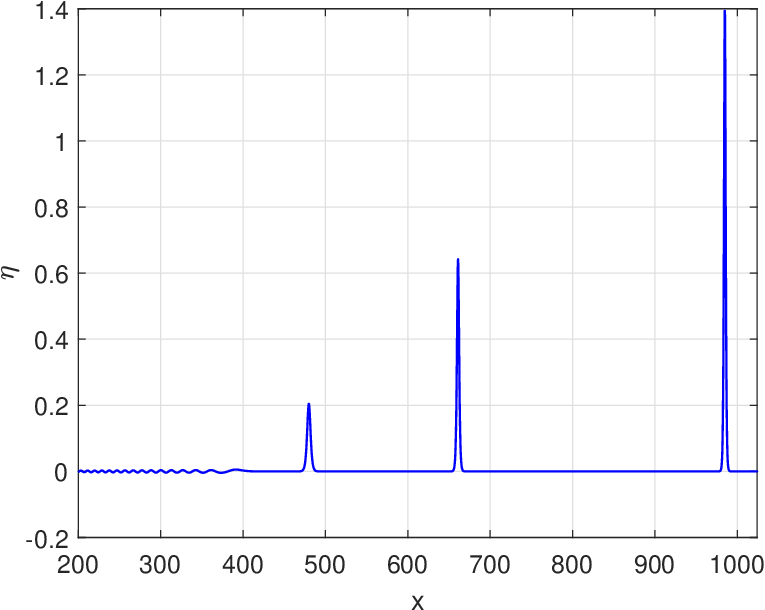}}
\caption{Resolution property with SPLMM2 {when integrating Boussinesq system (\ref{bb})} with initial condition (\ref{Gaussian}): $\Delta x=1.25\times 10^{-1}, \Delta t=1.5625\times 10^{-3}$. (a) $\eta$ component of an initial Gaussian profile; (b) numerical solution at $t=800$; (c),(d) magnifications of (b).}
\label{cd_FIG8}
\end{figure}
In Figure \ref{cd_FIG8}(a) we show a Gaussian profile
\begin{eqnarray}
&&\eta_{0}(x)=Ae^{-B(x-x_{0})^{2}},\; w_{0}(x)=C\eta_{0}(x),\nonumber\\
&&A=1.2, B=6.42\times 10^{-2}, C=0.87, x_{0}=-400,\label{Gaussian}
\end{eqnarray}
which is taken as initial condition for the semidiscrete system (\ref{sdbb}) with $a,b,c,d$ in (\ref{param}). The problem is integrated with SPLMM2 (with a starting procedure of the second order) and in Figure \ref{cd_FIG8}(b) the numerical solution at $t=800$ is shown. This consists of a train of solitary waves traveling to the right along with two groups of  dispersive tails, one following the train and the second one traveling in the opposite direction.
\begin{figure}[htbp]
\centering
\subfigure[]
{\includegraphics[width=6.2cm]{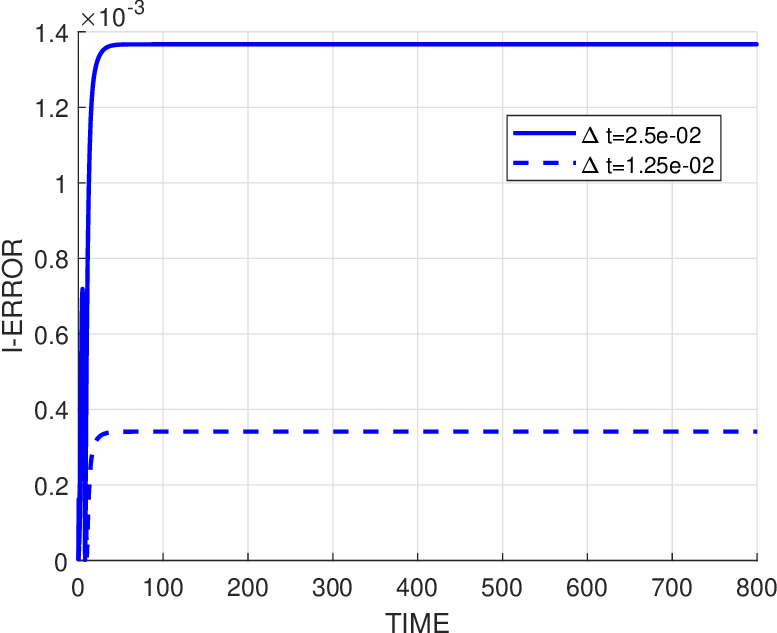}}
\subfigure[]
{\includegraphics[width=6.2cm]{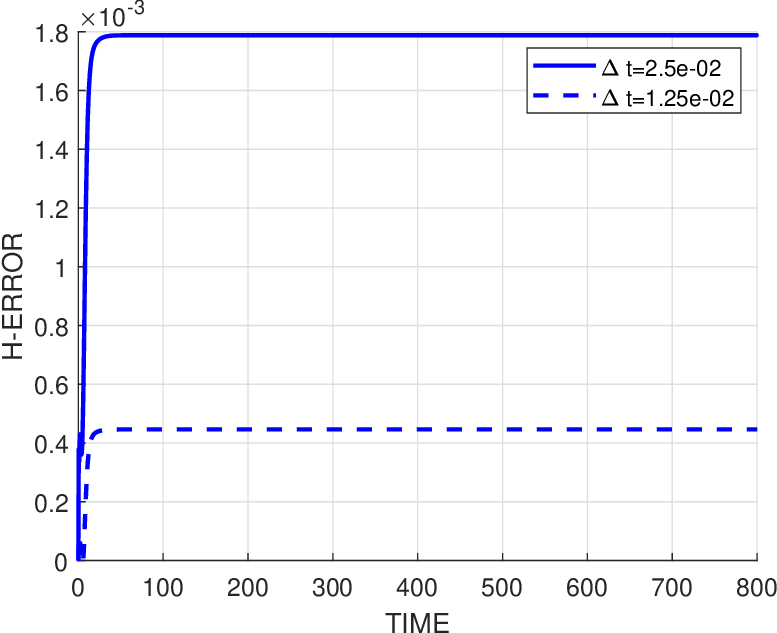}}
\caption{Error in the invariants against time with SPLMM2 when integrating Boussinesq system (\ref{bb}) with initial condition (\ref{Gaussian}): $\Delta x=1.25\times 10^{-1}$.}
\label{cd_FIG9}
\end{figure}
During the simulation, we measured the time evolution of the error in the last two invariants $H_{N}$ and $I_{N}$, for two time-step sizes. From the results given in Figure \ref{cd_FIG9}, we observe that the good behaviour of the method still holds.

\section*{Acknowledgments}
This research has been supported by Ministerio de Ciencia e Innovaci\'on project PID2023-147073NB-I00.

 \section*{Conflict of interests} The author  has no conflicts of interest to declare.
	 
	 \section*{Data availability statement} 
	 Data sharing is not applicable to this article as no new data were created or analyzed in this study.

\bibliographystyle{plain}

\end{document}